\numberwithin{equation}{section}
\newcommand{\N}{{\mathbb N}}
\newcommand{\R}{{\mathbb R}}
\newcommand{\Rd}{{{\mathbb R}^{d}}}
\newcommand{\Sdmone}{{{\mathbb S}^{d-1}}}
\newcommand{\bbS}{{\mathbb S}}
\newcommand{\bbN}{{\mathbb N}}
 \def\t{{\theta}}
 \def\l{{\lambda}}
 \def\d{{\delta}}
 \def\CL{{\mathcal L}}
 \def\CO{{\mathcal O}}
 \def\NN{{\mathbb N}}
\newtheorem{thm}{Theorem}[section]
\newtheorem{lem}[thm]{Lemma}
\newtheorem{prop}[thm]{Proposition}
\newtheorem{conj}[thm]{Conjecture}
\theoremstyle{remark}
\newtheorem{rem}{Remark}[section]
\newcommand{\extras}[1]{}
\begin{document}
\pagenumbering{arabic}

\vspace{-5ex}
\title{A P\'olya criterion for  (strict) positive definiteness on the sphere}
\author{R.K. Beatson, W. zu Castell, Y. Xu\footnote{The work of this author was supported in part by NSF Grant DMS-1106113}}
\date{\today}
\maketitle

\begin{abstract}
Positive definite functions are very important in both theory and applications of
approximation theory, probability and statistics. In particular, identifying strictly positive definite kernels is of
great interest as interpolation problems corresponding to these kernels are guaranteed to be poised.
A  Bochner type result of Schoenberg  characterises continuous positive definite zonal functions, 
$f(\cos \cdot)$,  on the sphere $\Sdmone$, as those with nonnegative Gegenbauer coefficients.
More recent results characterise strictly positive definite functions on $\Sdmone$
 by stronger conditions on the signs of the Gegenbauer coefficients. Unfortunately, given a function $f$,
 checking the signs of all the Gegenbauer coefficients can be an onerous, or impossible,  task. Therefore, it is natural to seek simpler sufficient conditions which guarantee (strict) positive definiteness.  We state a
conjecture which leads to a P\'olya type criterion for functions to be (strictly) positive
definite on the sphere $\Sdmone$. In analogy
to the case of the Euclidean space, the conjecture claims
positivity of a certain integral involving Gegenbauer
polynomials. We provide a proof of the conjecture for $d$ from $3$ to $8$. 

\end{abstract}

\section{Introduction}
\label{sec:introduction}

Positive definite functions are very important in both theory and applications of
approximation theory, probability and statistics. Although  Bochner's theorem characterises
continuous positive
definite functions on $\Rd$ it has long been recognised that the conditions of Bochner's theorem may be difficult to check.
For example Askey~\cite{Askey_jmaa75} states
\begin{quotation}
``It is an unfortunate fact that necessary and sufficient conditions are often
impossible to verify and one must search for useful sufficient conditions when
confronted with a particular example.''
\end{quotation}
In 1918 P\'olya \cite{Po18} proved that an even, continuous
function which is convex on the positive real line and vanishes at infinity
has a non-negative Fourier transform. Later \cite{Po49} he proved a similar statement
for the inverse transform,  thereby showing that an even function $f$
which is continuous and convex on $[0,\infty)$, and vanishes at infinity, is
positive definite on the real line. This sufficient condition is now commonly referred
to as the \emph{P\'olya criterion}.

The criterion has been generalized for positive definite functions on
$\Rd$ \cite{As73,Ru58,Tr89}, considering radial functions instead of
even ones. Further refinements of these criteria followed (cf. \cite{Gn01}
and the references cited therein). Askey's proof \cite{As73} relates
his P\'olya type criterion to the non-negativity of certain integrals
\cite{Fi75,Ga75}. These integrals represent the Fourier transform of
a function, which can in some sense be considered as a prototype for
all functions Askey's P\'olya type criterion is applicable to.

A continuous function $g:[0,\pi] \to \R$ is (zonal) positive definite on the sphere
$\Sdmone$ if for all distinct point sets $X = \{x_1,\ldots, x_n\}$ on the sphere and
all $n\in\N$, the matrices  $M_X:=\left [ g(d(x_i,x_j))\right]_{i,j =1}^n$ 
are positive semi-definite, that is, $c^T M_X c \ge 0$ for all $c\in\R^n$. In this definition
$d(x,y)$ denotes the geodesic distance, $\arccos(x^T y)$, on $\Sdmone$. The function $g$ is 
\emph{strictly positive definite} 
on $\Sdmone$ if the matrices are all positive definite, 
that is, $c^T M_X c > 0$, for all nonzero $c \in \R^n$. The importance of strict positive definiteness 
is its connection with the poisedness of interpolation. Thus, if $g$ is strictly positive definite on $\Sdmone$
 then  there is exactly one function
of the form
$$
s(x) = \sum_{j=1}^n \lambda_j g( d(x,x_j)  ),
$$  
which takes given values, $\{ y_j\}_{j=1}^n$, at the distinct nodes
${\mathcal X} = \{x_j\}_{j=1}^n  \subset \Sdmone$. Therefore, an easy means of identifying strictly positive definite kernels is of
great interest as it will enable the assembly of a toolkit of different kernel based interpolation methods.

Continuous positive definite zonal functions on the sphere have been studied
by Schoenberg \cite{Sc42} who proved the following theorem of Bochner type
(Theorem 1 in \cite{Sc42}). 

\begin{thm} \label{th:schoenberg}
Let $f$ be a continuous function on $[-1,1]$. The function $f(\cos \cdot)$  is positive definite on
$\Sdmone$
if and only if $f(\cos\theta)$ has a Gegenbauer expansion
\begin{equation}
\label{eq:gegenbauer_expansion}
f(\cos\theta) \, = \,
\sum_{k=0}^\infty a_k C_k^{\frac{d-2}2}(\cos\theta),
\qquad \theta\in[0,\pi],
\end{equation}
in which all of
the coefficients $a_k$, $k\in\N_0$, are nonnegative, and
$ \sum_{k=0}^\infty a_k C_k^{\frac{d-2}{2}}(1) < \infty$.
\end{thm}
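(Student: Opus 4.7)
The plan is to prove the two directions separately: sufficiency via the addition formula for Gegenbauer polynomials, and necessity via a duality argument using the Funk--Hecke formula together with cubature on the sphere.

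For sufficiency, I would invoke the addition formula
\[
C_k^{\frac{d-2}{2}}(x \cdot y) \;=\; \frac{C_k^{\frac{d-2}{2}}(1)}{\dim \CH_k} \sum_{l} Y_{k,l}(x) \overline{Y_{k,l}(y)},
\]
where $\{Y_{k,l}\}$ is an $L^2$-orthonormal basis of the space $\CH_k$ of spherical harmonics of degree $k$ on $\Sdmone$. This identity exhibits each kernel $(x,y)\mapsto C_k^{(d-2)/2}(x \cdot y)$ as a Gram kernel and hence positive semi-definite. The hypothesis $\sum_k a_k C_k^{(d-2)/2}(1) < \infty$, combined with the bound $|C_k^{(d-2)/2}(\cos\theta)| \leq C_k^{(d-2)/2}(1)$, yields absolute uniform convergence of \eqref{eq:gegenbauer_expansion} by the Weierstrass M-test, so the limit $f(\cos\cdot)$ is continuous and inherits positive definiteness from its partial sums.

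For necessity, assume $f(\cos\cdot)$ is continuous and positive definite on $\Sdmone$. I would define the candidate Gegenbauer coefficients
\[
a_k \;:=\; \frac{1}{h_k} \int_{-1}^1 f(t) \, C_k^{\frac{d-2}{2}}(t) (1-t^2)^{\frac{d-3}{2}} \, dt, \qquad h_k>0,
\]
and show $a_k \geq 0$ as follows. The Funk--Hecke formula identifies $a_k$, up to a positive factor $c_k$, with the double integral
\[
J_k \;:=\; \int_{\Sdmone}\!\!\int_{\Sdmone} f(x \cdot y) \, Y_{k,l}(x) \overline{Y_{k,l}(y)} \, d\sigma(x) d\sigma(y).
\]
Approximating $J_k$ by quadrature sums $\sum_{i,j} w_i Y_{k,l}(x_i) \, \overline{w_j Y_{k,l}(x_j)} \, f(x_i \cdot x_j)$ with positive weights $w_i$, each such sum is a positive-definiteness quadratic form with coefficients $c_i := w_i Y_{k,l}(x_i)$, hence nonnegative. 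Passing to the limit along a refining sequence of positive cubature rules on $\Sdmone$ yields $J_k \geq 0$, and therefore $a_k \geq 0$. The summability $\sum a_k C_k^{(d-2)/2}(1) < \infty$ then follows from Abel summability of the Gegenbauer expansion at $\theta = 0$ combined with monotone convergence, since all coefficients are already known to be nonnegative; uniform convergence to $f(\cos\theta)$ then follows from the Weierstrass M-test as in the sufficiency direction.

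The main obstacle is in the necessity direction: justifying the limit from the finite-point positive-definiteness inequality to the integral inequality $J_k \geq 0$. The required ingredients---the addition formula, the Funk--Hecke identity (which itself rests on the addition formula), and positive cubature on $\Sdmone$ converging to the surface integral for continuous integrands---are classical, but they must be combined in the right order so that $a_k \geq 0$ is derived before invoking any convergence of the Gegenbauer series \eqref{eq:gegenbauer_expansion} that has not yet been established. A secondary, minor point is noting that positive definiteness for real coefficient vectors automatically implies the corresponding inequality for complex $c_i$, because the matrices $M_X$ are real symmetric.
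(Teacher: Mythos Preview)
The paper does not prove this theorem; it is quoted as Schoenberg's result \cite{Sc42} and used as background. So there is no ``paper's own proof'' to compare against.

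That said, your outline is a correct and standard route to Schoenberg's theorem, and is close in spirit to Schoenberg's original argument. The sufficiency direction via the addition formula is exactly right. For necessity, your cubature/Funk--Hecke argument for $a_k\ge 0$ is valid; just be explicit that positive-weight cubature rules with \emph{distinct} nodes exist and converge for continuous integrands on $\Sdmone$ (e.g.\ equal-weight rules from well-separated nets), since the definition of positive definiteness in the paper is stated for distinct point sets. The one place that deserves a little more care is the summability step: to invoke Abel (or Poisson) summability of the Gegenbauer expansion at $\theta=0$ you must first know that $\sum_k a_k r^k C_k^{(d-2)/2}(1)$ converges for each $r<1$, which follows from the crude bound $|a_k|\le C\,h_k^{-1/2}$ and the polynomial growth of $C_k^{(d-2)/2}(1)$; you should also cite the relevant approximate-identity result (Poisson or Ces\`aro means of Gegenbauer series of a continuous function converge uniformly) rather than leave it implicit. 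Once that is in place, monotone convergence from $a_k\ge 0$ gives $\sum_k a_k C_k^{(d-2)/2}(1)=f(1)<\infty$, and the uniform convergence of \eqref{eq:gegenbauer_expansion} follows from the $M$-test as you say. An alternative, slightly slicker packaging of the necessity direction is to invoke Mercer's theorem for the continuous positive-definite kernel $K(x,y)=f(x\cdot y)$ on the compact space $\Sdmone$, which yields nonnegativity of the eigenvalues (the $a_k$, by Funk--Hecke) and absolute uniform convergence of the eigenfunction expansion in one stroke.
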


The characterization of strictly positive definite functions on $\Sdmone$ came somewhat later.
A simple sufficient condition~\cite{Xu92} states that $f(\cos \cdot)$ is strictly positive definite if, 
in addition to the conditions of Theorem~\ref{th:schoenberg},
all the Gegenbauer coefficients $a_k$ are positive.  
Chen, Menegatto and  Sun~\cite{Ch03} showed that
a necessary and sufficient condition for $f(\cos \cdot)$
be strictly positive definite on $\Sdmone$, $d \geq 3$, is that, in addition to the conditions of 
Theorem~\ref{th:schoenberg}, infinitely many of the Gegenbauer coefficients  with odd index, and infinitely many of those with even index,
are positive.
 
Schoenberg also characterized those
continuous functions $f:[-1,1]\to\R$ such that
$f(\cos \cdot)$ is a positive definite zonal function on all spheres
(cf. (1.6) and Theorem 2 in \cite{Sc42}).

\begin{thm}
\label{th:schoenberg_2}
Let $f$ be a continuous function on $[-1,1]$. The function $f(\cos \cdot)$ is positive definite on all spheres
$\Sdmone$, $d\ge 2$, if and only if $f(\cos\theta)$ has an expansion
\begin{equation}
\label{eq:abs_monotonic_expansion}
f(\cos\theta) \, = \, 
\sum_{k=0}^\infty a_k (\cos\theta)^k,
\qquad \theta\in[0,\pi],
\end{equation}
where $a_k\ge 0$ for all $k\in\N_0$ and $\sum_{k=0}^\infty a_k$ converges.
\end{thm}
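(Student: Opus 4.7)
The plan is to handle sufficiency by a direct Hadamard/tensor-power argument and necessity by invoking Theorem~\ref{th:schoenberg} on every sphere and then letting $d\to\infty$.

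For sufficiency, given $x_1,\ldots,x_n\in\Sdmone$ and $c\in\R^n$, I would use $\cos d(x_i,x_j) = x_i\cdot x_j$ to rewrite the matrix $[\cos^k d(x_i,x_j)]_{i,j}$ as $[(x_i\cdot x_j)^k]_{i,j}$, which is the $k$-th Hadamard power of the Gram matrix $[x_i\cdot x_j]_{i,j}$ and hence PSD (equivalently, it is the Gram matrix of the tensor powers $x_i^{\otimes k}$). Summing against the nonnegative weights $a_k$, with absolute convergence guaranteed by $\sum a_k<\infty$ and $|\cos\theta|\le 1$, gives $c^T[f(\cos\theta_{ij})]c=\sum_k a_k\,c^T[(x_i\cdot x_j)^k]c\ge 0$.

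For necessity, Theorem~\ref{th:schoenberg} yields, for each $d\ge 2$, an expansion $f(\cos\theta)=\sum_k a_k^{(d)}C_k^{(d-2)/2}(\cos\theta)$ with $a_k^{(d)}\ge 0$. Renormalize by setting $A_k^{(d)}:=a_k^{(d)}C_k^{(d-2)/2}(1)$ and $\widetilde G_k^{(d)}(t):=C_k^{(d-2)/2}(t)/C_k^{(d-2)/2}(1)$, so that
\begin{equation*}
 f(\cos\theta)\,=\,\sum_{k=0}^\infty A_k^{(d)}\,\widetilde G_k^{(d)}(\cos\theta),\qquad \sum_{k=0}^\infty A_k^{(d)}=f(1),\qquad |\widetilde G_k^{(d)}(t)|\le 1\ \text{on}\ [-1,1].
\end{equation*}
The key classical input is the limit $\widetilde G_k^{(d)}(t)\to t^k$ as $d\to\infty$, visible from $C_k^\lambda(1)=\binom{k+2\lambda-1}{k}$ combined with the power-series form of $C_k^\lambda$ in $\lambda=(d-2)/2$. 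Since each sequence $(A_k^{(d)})_d$ lies in $[0,f(1)]$, a diagonal (Tychonoff) extraction produces a subsequence $d_n\to\infty$ along which $A_k^{(d_n)}\to a_k\in[0,f(1)]$ for every $k$; Fatou gives $\sum a_k\le f(1)$.

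The final step is the termwise limit passage. I would split the Gegenbauer sum at level $N$, use $|\widetilde G_k^{(d_n)}(t)|\le 1$ to bound the tail by $f(1)-\sum_{k\le N}A_k^{(d_n)}$, let $n\to\infty$ to get $|f(\cos\theta)-\sum_{k\le N}a_k(\cos\theta)^k|\le f(1)-\sum_{k\le N}a_k$, and then $N\to\infty$; Abel continuity of the resulting nonnegative power series at $t=1$, together with continuity of $f$ at $\theta=0$, forces $\sum a_k=f(1)$ and $f(\cos\theta)=\sum a_k(\cos\theta)^k$. Uniqueness of power-series coefficients on $[-1,1]$ shows the $a_k$ do not depend on the chosen subsequence. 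The main obstacle is precisely this interchange of the $d\to\infty$ limit with the Gegenbauer sum, which requires both the uniform bound on $\widetilde G_k^{(d)}$ and the exact mass conservation $\sum_k A_k^{(d)}=f(1)$; verifying the asymptotic $\widetilde G_k^{(d)}(t)\to t^k$ is the other technically nontrivial ingredient, though it follows from standard Gegenbauer identities.
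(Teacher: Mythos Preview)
The paper does not prove Theorem~\ref{th:schoenberg_2}; it is quoted from Schoenberg \cite{Sc42} as background. So there is no ``paper's own proof'' to compare with, and your proposal has to be judged on its own merits.

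Your sufficiency argument is fine: the Schur product theorem (or the tensor-power Gram interpretation) makes each $[(x_i\cdot x_j)^k]$ positive semidefinite, and the absolutely convergent nonnegative combination preserves this.

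For necessity, the strategy of applying Theorem~\ref{th:schoenberg} for each $d$, normalising so that $\sum_k A_k^{(d)}=f(1)$, and using $\widetilde G_k^{(d)}(t)\to t^k$ is indeed Schoenberg's route. However, the step you flag as ``the main obstacle'' is not actually closed by the Abel argument you propose. From
\[
\Bigl|\,f(t)-\sum_{k\le N}a_k t^k\Bigr|\ \le\ f(1)-\sum_{k\le N}a_k
\]
you pass to $|f(t)-g(t)|\le f(1)-L$ with $g(t)=\sum_k a_k t^k$ and $L=\sum_k a_k\le f(1)$. Evaluating at $t=1$ (using Abel for $g$ and continuity for $f$) gives only $f(1)-L\le f(1)-L$, which is vacuous; nothing here forces $L=f(1)$. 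In probabilistic language, the diagonal extraction can lose mass to infinity, and the two ingredients you list (the uniform bound $|\widetilde G_k^{(d)}|\le 1$ and mass conservation $\sum_k A_k^{(d)}=f(1)$) are, by themselves, not enough to prevent this. Indeed, the connection formula \eqref{connection} shows that $\sum_{k\le N}A_k^{(d)}$ is \emph{decreasing} in $d$, so mass drifts toward higher indices as $d\to\infty$ and tightness is genuinely at issue.

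What is missing is an a~priori tightness bound, uniform in $d$, on the tails $\sum_{k>N}A_k^{(d)}$. Schoenberg obtains this (in effect) by a separate argument---for instance via absolute monotonicity of $f$ on $[0,1]$ established through positive-definiteness inequalities on high-dimensional spheres---rather than by Abel continuity. Once tightness is in hand, your limit interchange goes through and the rest of the proposal is correct.
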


A function $f:(c,d)\to \R$ is {\em absolutely monotonic}, if
it has derivatives of all orders on $(c,d)$, and these are all nonnegative. Such a function is  characterized by having a series expansion
\begin{equation} \label{eq:absolute_monotonicity}
f(x) = \sum_{k=0}^\infty f^{(k)}(c^+) (x-c)^k,
\end{equation}
converging to $f(x)$ for all $x\in (c,d)$.
If in addition $f$ is continuous on $[c,d]$ then the expansion
converges to $f$ uniformly on $[c,d]$. Therefore, Theorem~\ref{th:schoenberg_2} identifies
the continuous functions $f:[-1,1] \to \R$  such that  $f(\cos \cdot )$ is positive definite on all spheres, as those for which $f$ is
the analytic extension to $[-1,1]$ of an absolutely monotonic function
on $[0,1]$.

Note that a similar result holds true for radial functions in $\Rd$, too, with 
absolute monotonicity being replaced by complete monotonicity.
Theorem \ref{th:schoenberg_2} hints towards what should be a P\'olya type 
criterion for positive definite, zonal functions on the sphere. If we assume 
$f(\cos \cdot)$ to be positive definite on $\Sdmone$ only, we can expect the pattern
(\ref{eq:absolute_monotonicity}) to break down after finitely many terms.
This intuition fully applies in the Euclidean case $\Rd$.

The purpose of the present paper is to formulate a P\'olya type criterion for 
positive definite, zonal functions on the sphere. The proof actually depends
on proving non-negativity of a certain integral, see Conjecture \ref{conj}
below. In parallel to the Euclidean theory, the proof in the general case 
turns out to be quite hard. We were able to establish the result for dimensions
up to $d=8$, doing extensive computer algebra and  numerical calculations for higher dimensions.

Before stating the conjecture, let us first formulate the P\'olya type criterion.

\begin{thm}\label{th:polya_criterion}
Let $d\in\{3,4,\ldots,8\}$ and $\lambda=\lceil \frac{d-2}{2}\rceil$. 
Let the real-valued function
$g(\cdot)=f(\cos\cdot)$ on $[0,\pi]$ satisfy the following conditions:
\begin{itemize}
\item[(i)] $g\in C^{\lambda}[0,\pi]$, 
\item[(ii)] $\text{supp}(g) \subset [0,\pi)$,
\item[(iii)] the derivative, from the right, $g^{(\lambda+1)}(0)$ exists, and is finite,
\item[(iv)] $(-1)^{\lambda} g^{(\lambda)}$ is convex.
\end{itemize}
Then $g$  is a positive definite function on $\Sdmone$.

\medskip

If,  in addition to the above properties,  $g^{(\lambda)}$, restricted to $(0,\pi)$, does not reduce to a linear
polynomial,  then $g$ is a  strictly positive definite function on $\Sdmone$.

\end{thm}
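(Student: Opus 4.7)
The plan is to apply Schoenberg's characterisation (Theorem~\ref{th:schoenberg}) and reduce the positive-definiteness assertion to showing that every Gegenbauer coefficient
$$a_k \;=\; \gamma_{k,d} \int_0^\pi g(\theta)\, C_k^{(d-2)/2}(\cos\theta)\,(\sin\theta)^{d-2}\,d\theta$$
(with $\gamma_{k,d}>0$ the standard normalising constant) is nonnegative. The overall strategy mirrors Askey's approach for $\mathbb{R}^d$: peel off $\lambda$ derivatives from $g$ via integration by parts, and then use convexity of $(-1)^\lambda g^{(\lambda)}$ to rewrite $a_k$ as an integral against a nonnegative measure.

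First I would integrate by parts $\lambda$ times, shifting the derivatives onto $g$. Hypotheses (i)--(iii) control the boundary contributions at $\theta=0$, while (ii) kills those at $\theta=\pi$, leaving
$$a_k \;=\; \gamma_{k,d} \int_0^\pi (-1)^\lambda g^{(\lambda)}(\theta)\, K_k(\theta)\, d\theta,$$
where $K_k(\theta)$ is the explicit kernel produced by $\lambda$-fold antidifferentiation of the weighted Gegenbauer polynomial. By (ii), the convex function $h := (-1)^\lambda g^{(\lambda)}$ vanishes on $[\pi-\varepsilon,\pi]$ for some $\varepsilon>0$; consequently $h'(\pi^-)=0$, and $h$ admits the classical representation
$$h(\theta) \;=\; \int_0^\pi (t-\theta)_+\, d\nu(t)$$
for a finite nonnegative Borel measure $\nu$ on $[0,\pi]$, namely the distributional second derivative of $h$. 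Substituting this into $a_k$ and swapping the order of integration yields
$$a_k \;=\; \gamma_{k,d}\int_0^\pi \Phi_k(t)\,d\nu(t), \qquad \Phi_k(t) := \int_0^t (t-\theta)\, K_k(\theta)\, d\theta.$$
Nonnegativity of $a_k$ is thus reduced to the integral inequality $\Phi_k(t)\ge 0$ for all $t\in[0,\pi]$ and all $k\in\mathbb{N}_0$, which is precisely the content of Conjecture~\ref{conj}.

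The main obstacle is establishing this conjectured inequality. For each of the dimensions $d=3,\ldots,8$ I would compute $K_k$ in closed form using standard identities for Gegenbauer (or equivalently Jacobi) polynomials, in particular $\frac{d}{dx}C_k^\mu(x)=2\mu C_{k-1}^{\mu+1}(x)$ together with the three-term recurrence, reducing $\Phi_k(t)$ to an explicit expression in $\cos t$, $\sin t$, and Jacobi polynomials evaluated at $\cos t$. The plan is then to display $\Phi_k(t)$ either as a manifestly nonnegative product of positive factors on $[0,\pi]$, or as a combination whose signs are forced by known Jacobi-polynomial identities; the finitely many small-$k$ exceptions are to be handled by direct computer-algebra verification. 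The rapid growth of algebraic complexity with $d$ is precisely what confines us to $d\le 8$.

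Finally, for the strict positive definiteness addendum I would appeal to the Chen--Menegatto--Sun criterion: it suffices to exhibit infinitely many even-indexed and infinitely many odd-indexed $a_k$ that are strictly positive. If $g^{(\lambda)}$ were linear on $(0,\pi)$ then $h''=0$ there and hence $\nu((0,\pi))=0$; the hypothesis rules this out, so $\nu\ne 0$. Combined with the by-product of the case analysis behind the conjecture, namely $\Phi_k(t)>0$ on an open subset of $(0,\pi)$ for all sufficiently large $k$ of either parity, we conclude $a_k>0$ for all such $k$, yielding strict positive definiteness.
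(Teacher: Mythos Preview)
Your reduction to the positivity of $\Phi_k$ is essentially the paper's argument: the paper also expresses $b_n(g)$ as an integral of $F_n^\lambda(\tau)$ against a nonnegative weight coming from convexity of $(-1)^\lambda g^{(\lambda)}$. The only cosmetic difference is that the paper mollifies $g$ to a $C^{\lambda+2}$ function $G_h$ and uses $G_h^{(\lambda+2)}\ge 0$, whereas you use the measure representation $h(\theta)=\int (t-\theta)_+\,d\nu(t)$; both land on the same integral $F_n^\lambda(t)=\int_0^t (t-\theta)^{\lambda+1}C_n^\lambda(\cos\theta)(\sin\theta)^{2\lambda}\,d\theta$.

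There are, however, three genuine gaps in your plan. First, Schoenberg's theorem requires not only $a_k\ge 0$ but also $\sum a_k C_k^{(d-2)/2}(1)<\infty$, and you never address this. The paper proves it by establishing the decay $\|F_n^\lambda\|_\infty=O(n^{-3})$ (Lemma~\ref{lem:FnbBound}), which combined with the asymptotics of $h_n$ and $C_n^\lambda(1)$ gives $a_n C_n^\lambda(1)=O(n^{-2})$. Second, for odd $d$ your $\Phi_k$ is (up to a constant) $F_k^{(d-2)/2,\lambda+1}$ with a \emph{half-integer} Gegenbauer parameter, and Proposition~\ref{propn:proven_cases} says nothing about those cases. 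The paper sidesteps this entirely: it proves the integral inequality only for $d\in\{4,6,8\}$ (integer $\lambda$) and then observes that a function (strictly) positive definite on $\mathbb{S}^{d-1}$ is automatically (strictly) positive definite on $\mathbb{S}^{d-2}$, which gives $d\in\{3,5,7\}$ for free. Third, your outline for proving $\Phi_k\ge 0$ --- displaying it as a ``manifestly nonnegative product'' or invoking Jacobi-polynomial sign identities --- is not what works. The paper's proofs (Section~3) instead exploit the recursion $G_n^{\lambda,\delta}-G_{n+2}^{\lambda,\delta}\propto G_n^{\lambda+1,\delta}$, reduce to sign analysis of derivatives of $h_\delta(u)=\int_0^1(1-s)^{\delta+1}\cos(us)\,ds$, split into the regimes $nt$ small and $nt$ large, and for the former use upper bounds on the largest zero of $C_n^\lambda$; small $n$ are handled by direct plotting. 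Your proposed route does not obviously lead to any of this machinery.
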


We conjecture Theorem~\ref{th:polya_criterion} to be true for all dimensions $d>2$. Therefore, we
will provide a proof for general dimension, relying on the following 
conjecture.

\begin{conj}\label{conj}
Let $\delta>0$, $\lambda > 0$ and $n\in\N_0$. 
For every $0<t<\pi$, define
\begin{equation}
\label{eq:define_F}
F_n^{\lambda,\delta}(t) 
\, = \,
\int_0^t (t-\theta)^\delta \, C_n^\lambda(\cos\theta)
(\sin\theta)^{2\lambda}\, d\theta.
\end{equation}
Then $F_n^{\lambda,\delta}(t)> 0$  for all $t$ in $(0,\pi]$
if and only if $\delta\geq\lambda+1$.
\end{conj}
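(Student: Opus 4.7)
The argument naturally splits into sufficiency (the substantive content) and necessity (from asymptotic considerations). My first move for sufficiency would be to reduce to the boundary value $\delta=\lambda+1$ via the semigroup property of Riemann--Liouville fractional integration. Writing $F_n^{\lambda,\delta}(t)=\Gamma(\delta+1)(I^{\delta+1}h_n)(t)$ with $h_n(\theta)=C_n^\lambda(\cos\theta)(\sin\theta)^{2\lambda}$ and $I^\alpha$ the Riemann--Liouville integral based at $0$, the identity $I^\alpha I^\beta=I^{\alpha+\beta}$ yields, for any $\delta>\lambda+1$,
\[
F_n^{\lambda,\delta}(t) \;=\; \frac{\Gamma(\delta+1)}{\Gamma(\lambda+2)\,\Gamma(\delta-\lambda-1)}\int_0^t(t-s)^{\delta-\lambda-2}\,F_n^{\lambda,\lambda+1}(s)\,ds,
\]
so pointwise positivity of $F_n^{\lambda,\lambda+1}$ on $(0,\pi]$ propagates to every $\delta\ge\lambda+1$, and the problem collapses to the single critical value.

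\textbf{Sufficiency at $\delta=\lambda+1$.} For $n=0$ the integrand is strictly positive. For $n\geq 1$ I would exploit the Sturm--Liouville form of the Gegenbauer equation,
\[
\frac{d}{d\theta}\bigl[(\sin\theta)^{2\lambda}\,\tfrac{d}{d\theta}\,C_n^\lambda(\cos\theta)\bigr] \;=\; -\,n(n+2\lambda)\,(\sin\theta)^{2\lambda}\,C_n^\lambda(\cos\theta),
\]
combined with $(d/d\theta)C_n^\lambda(\cos\theta)=-2\lambda\sin\theta\,C_{n-1}^{\lambda+1}(\cos\theta)$. A single integration by parts---with boundary terms vanishing since $(\sin 0)^{2\lambda}=0$ and $(t-t)^{\lambda+1}=0$---yields
\[
F_n^{\lambda,\lambda+1}(t) \;=\; \frac{2\lambda(\lambda+1)}{n(n+2\lambda)}\int_0^t(t-\theta)^\lambda\,(\sin\theta)^{2\lambda+1}\,C_{n-1}^{\lambda+1}(\cos\theta)\,d\theta,
\]
trading $n$ for $n-1$ and $\lambda$ for $\lambda+1$ at the cost of an extra factor of $\sin\theta$ in the weight. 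From here the plan is either to iterate the reduction, tracking how the excess $\sin\theta$ factors combine into a manifestly nonnegative kernel, or to substitute the Gegenbauer integral representation
\[
C_n^\lambda(\cos\theta) \;=\; \frac{\Gamma(\lambda+\tfrac12)}{\sqrt{\pi}\,\Gamma(\lambda)}\int_{-1}^{1}(\cos\theta+is\sin\theta)^n(1-s^2)^{\lambda-1}\,ds
\]
and swap orders of integration, after which the inner $\theta$-integral evaluates to a specific hypergeometric expression whose sign one can hope to read off.

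\textbf{Necessity.} For $\delta<\lambda+1$ I would exhibit $(n,t)$ with $F_n^{\lambda,\delta}(t)\leq 0$ using the Darboux asymptotic
\[
C_n^\lambda(\cos\theta)\,(\sin\theta)^{2\lambda} \;\sim\; \frac{2\,\Gamma(n+\lambda)}{2^\lambda\,\Gamma(\lambda)\,n!}\,(\sin\theta)^\lambda\,\cos\bigl((n+\lambda)\theta-\tfrac{\lambda\pi}{2}\bigr),
\]
uniform on compact subsets of $(0,\pi)$. Substituting into $F_n^{\lambda,\delta}$ and applying Erd\'{e}lyi's lemma to the endpoint $\theta=t$ produces a leading oscillatory term of order $n^{\lambda-\delta-2}\cos((n+\lambda)t+\phi)$ with constant phase~$\phi$. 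Once $\delta$ drops below $\lambda+1$ this oscillation decays too slowly to be overwhelmed by any smooth positive contribution, so for each fixed $t\in(0,\pi)$ with $\sin t\neq 0$ there exist arbitrarily large $n$ for which $F_n^{\lambda,\delta}(t)<0$.

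\textbf{Main obstacle.} The real difficulty lies entirely in proving strict positivity of $F_n^{\lambda,\lambda+1}(t)$ for all $n\ge 1$ and $t\in(0,\pi]$. For integer $\lambda$, the product $(\sin\theta)^{2\lambda}C_n^\lambda(\cos\theta)$ is a finite trigonometric polynomial, so $F_n^{\lambda,\lambda+1}(t)$ reduces to an explicit finite sum of integrals of the form $\int_0^t(t-\theta)^{\lambda+1}\cos(k\theta)\,d\theta$ whose sign can, with some effort, be analyzed directly---this is almost certainly the route the authors take for $d\le 8$. For generic $\lambda>0$ no such algebraic shortcut is available, and the positivity must instead come from a delicate Askey--Gasper-type inequality for hypergeometric integrals, which is likely why the authors state only a conjecture in the general case.
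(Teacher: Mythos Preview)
The statement you are attempting to prove is a \emph{conjecture} in the paper, not a theorem; the authors do not prove it in full and only establish the sufficiency direction for $\lambda\in\{1,2,3\}$ (Proposition~1.4). Your proposal, to its credit, is candid about this: you correctly isolate the reduction to the critical exponent $\delta=\lambda+1$ via the fractional-integral semigroup (this is exactly the paper's Lemma~2.2), and you correctly flag that the entire difficulty is the strict positivity of $F_n^{\lambda,\lambda+1}$ on $(0,\pi]$. But your sufficiency argument then stops at ``one can hope to read off'' --- the Sturm--Liouville integration by parts you perform is correct, yet it produces $(\sin\theta)^{2\lambda+1}$ rather than $(\sin\theta)^{2(\lambda+1)}$, so the recursion does not close on the family $F_n^{\lambda,\lambda+1}$, and neither iteration nor the Laplace-type integral representation you mention is known to yield a sign without substantial further work. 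This is precisely the gap the paper also leaves open in general.

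Where the paper \emph{does} succeed (integer $\lambda=1,2,3$), its method is quite different from your outline. Rather than Sturm--Liouville recursion in $\lambda$, the authors expand $(\sin\theta)^{2\lambda}C_n^\lambda(\cos\theta)$ as a finite cosine sum (Lemma~3.1), obtain an explicit closed form for $F_n^\lambda(t)$ (Lemma~3.2), and then prove positivity case by case: for $\lambda=1$ by a direct monotonicity argument on $h(u)=(u-\sin u)/u^3$; for $\lambda=2,3$ by a two-regime analysis (small $t$ via bounds on the largest Gegenbauer zero, large $nt$ via sign analysis of derivatives of $h_\delta$), with low $n$ handled by inspection of the explicit formula. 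Your guess that ``this is almost certainly the route the authors take for $d\le 8$'' is correct in spirit, but the actual mechanism is the recursion \eqref{inductG} in $\lambda$ at the level of the \emph{integrated} quantities $G_n^{\lambda,\delta}$, not your Sturm--Liouville step on the integrand.

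Finally, the necessity direction you sketch (Darboux asymptotics forcing sign changes when $\delta<\lambda+1$) is plausible but is not treated in the paper at all; it is simply part of the conjecture.
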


The conjecture is essentially equivalent to proving
$(t-\theta)^\delta_{+}$ is strictly positive definite on $\Sdmone$.
These zonal functions are supported on a spherical cap, as are the functions shown to be 
(strictly) positive definite in Theorem~\ref{th:polya_criterion}.

\medskip
The conjecture has been stated in greater generality than we really need for the 
positive definiteness results
for $\Sdmone$ of the form of Theorem~\ref{th:polya_criterion}.
For those we are only interested in Gegenbauer coefficients for expansions with integer parameter, 
$\lambda =\lceil (d-2)/2 \rceil$. 

\smallskip
The boundary case $\delta = \lambda+1$ of Conjecture~\ref{conj} 
will get special attention.
Therefore, we define
\begin{equation} \label{defn_Fsuplambda}
F_n^\lambda(t) =
 F_n^{\lambda,\lambda+1}(t) = \int_0^t (t-\theta)^{\lambda+1} C_n^\lambda(\cos \theta) (\sin \theta)^{2\lambda} \, d\theta .
\end{equation}
\smallskip
Those cases in which we have proven the conjecture are listed in the following proposition.

\begin{prop}\label{propn:proven_cases}
Let  $d \in \{  4, 6, 8\}$, $\lambda= (d-2)/2$ and $n\in \N_0$. Then 
$$
F_n^{\lambda}(t) 
\, = \,
\int_0^t (t-\theta)^{\lambda+1}\, C_n^\lambda(\cos\theta)
(\sin\theta)^{2\lambda}\, d\theta > 0,
$$
for all $0 < t \leq \pi$.
\end{prop}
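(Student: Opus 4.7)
\emph{Plan.} The plan is to handle each of the cases $\lambda \in \{1,2,3\}$ (equivalently $d \in \{4,6,8\}$) separately, exploiting that for integer $\lambda$ the integrand $C_n^\lambda(\cos\theta)(\sin\theta)^{2\lambda}$ is a finite trigonometric polynomial and so $F_n^\lambda(t)$ admits an explicit closed form.

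\emph{Base case $\lambda=1$.} Here $C_n^1(\cos\theta) = \sin((n+1)\theta)/\sin\theta$, hence
$$C_n^1(\cos\theta)(\sin\theta)^2 = \sin\theta\,\sin((n+1)\theta) = \tfrac{1}{2}[\cos(n\theta) - \cos((n+2)\theta)],$$
so $F_n^1(t) = \tfrac{1}{2}[I_n(t) - I_{n+2}(t)]$ with $I_k(t) := \int_0^t(t-\theta)^2\cos(k\theta)\,d\theta$. Two integrations by parts give $I_0(t) = t^3/3$ and $I_k(t) = 2(kt-\sin(kt))/k^3$ for $k\ge 1$. The case $n=0$ is immediate since the integrand is nonnegative and not identically zero. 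For $n\ge 1$ one writes $F_n^1(t) = t^3[g(nt) - g((n+2)t)]$ with $g(x) := (x-\sin x)/x^3$, and reduces positivity to the strict monotonicity of $g$ on $(0,\infty)$. This in turn is the elementary inequality $x(2+\cos x) > 3\sin x$ for $x > 0$, provable by checking that the derivative $2(1-\cos x) - x\sin x = 2\sin(x/2)[2\sin(x/2) - x\cos(x/2)]$ is nonnegative on $(0,2\pi]$ and then noting $x(2+\cos x)\ge x > 3 \ge 3\sin x$ for $x\ge 2\pi$.

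\emph{Cases $\lambda\in\{2,3\}$.} For $n=0$ positivity is again immediate. For $n\ge 1$, I would apply the derivative identity
$$(\sin\theta)^{2\lambda}C_n^\lambda(\cos\theta) \;=\; \frac{2\lambda}{n(n+2\lambda)}\,\frac{d}{d\theta}\bigl[(\sin\theta)^{2\lambda+1}\,C_{n-1}^{\lambda+1}(\cos\theta)\bigr],$$
which follows from $\frac{d}{dx}C_n^\lambda = 2\lambda C_{n-1}^{\lambda+1}$ combined with the three-term relation $2\lambda(1-x^2)C_{n-1}^{\lambda+1} = (n+2\lambda-1)C_{n-1}^\lambda - n x C_n^\lambda$ and the shift $C_n^{\lambda+1} - C_{n-2}^{\lambda+1} = \tfrac{n+\lambda}{\lambda}C_n^\lambda$. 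Integrating by parts in $F_n^\lambda(t)$, both boundary terms vanish (from $(t-\theta)^{\lambda+1}|_{\theta=t}=0$ and $(\sin\theta)^{2\lambda+1}|_{\theta=0}=0$), giving
$$F_n^\lambda(t) = \frac{2\lambda(\lambda+1)}{n(n+2\lambda)}\int_0^t (t-\theta)^\lambda(\sin\theta)^{2\lambda+1}C_{n-1}^{\lambda+1}(\cos\theta)\,d\theta.$$
The integrand on the right is still a trigonometric polynomial, and combining the cosine expansion $C_m^\mu(\cos\theta) = \sum_{k=0}^m \tfrac{(\mu)_k(\mu)_{m-k}}{k!(m-k)!}\cos((m-2k)\theta)$ with the sine expansion of $(\sin\theta)^{2\lambda+1}$ and the closed forms of $\int_0^t(t-\theta)^\lambda\cos(j\theta)\,d\theta$ and $\int_0^t(t-\theta)^\lambda\sin(j\theta)\,d\theta$, one obtains $F_n^\lambda(t)$ as an explicit polynomial in $t$ plus oscillatory contributions of size $j^{-(\lambda+2)}$ in $\sin(jt)$ and $\cos(jt)$.

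\emph{Main obstacle.} The hard step is the final uniform positivity check for $\lambda=2,3$. The sinusoidal remainders have arguments $nt,(n+2)t,\ldots$ which may be arbitrarily large on $(0,\pi]$, so a term-by-term bound does not suffice. For $\lambda=1$ the entire analysis collapses to the one clean monotonicity statement for $g(x) = (x-\sin x)/x^3$; for $\lambda=2,3$ the analogue becomes a monotonicity statement for Taylor-remainder functions of the form $(\cos x - \sum_{s=0}^{\lambda+1} (-1)^s x^{2s}/(2s)!)/x^{\lambda+3}$ and its sine counterpart, which is still elementary but combinatorially richer. This is precisely why the dimension has to be restricted in the proposition, and --- as the authors indicate --- is what forces the transition to computer-algebra and numerical verification in higher dimensions.
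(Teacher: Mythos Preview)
Your treatment of $\lambda=1$ is correct and is essentially the paper's argument: write $F_n^1(t)=t^3[g(nt)-g((n+2)t)]$ with $g(u)=(u-\sin u)/u^3$ and prove $g$ is strictly decreasing on $(0,\infty)$.

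For $\lambda=2,3$, however, the proposal is not a proof but a sketch, and the sketch misidentifies what has to be shown. After your integration by parts (or directly, as the paper does via Lemma~\ref{lem:Fn}), one finds that $F_n^\lambda(t)$ is, up to positive normalisation, a $\lambda$-th order forward difference in $n$ of $t^{\lambda+2}h_\lambda(nt)$, where $h_\lambda$ is the Taylor--remainder function you describe (with denominator $u^{\lambda+2}$, not $u^{\lambda+3}$). So the relevant statement is \emph{not} monotonicity of $h_\lambda$, but positivity of a $\lambda$-th order difference, which would follow from a sign condition on $h_\lambda^{(\lambda)}$. That sign condition is false: for $\lambda=2$ the paper computes $h_2''(u)$ explicitly and shows it changes sign near $u\approx 3.69$, and for $\lambda=3$ both $h_3''$ and $h_3'''$ change sign. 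Consequently the clean $\lambda=1$ mechanism does not extend, and something qualitatively different is required.

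The paper's actual argument for $\lambda=2,3$ is a two-region split. For $t$ bounded away from $0$ (precisely $t>u^\ast/n$ for a numerical threshold $u^\ast$), one differentiates the penultimate $G_n^{\lambda-1,\lambda}$ in the continuous parameter $n$ and shows the derivative has the right sign; this is a delicate inequality mixing $h_\lambda'$, $h_\lambda''$ (and $h_\lambda'''$ for $\lambda=3$) whose verification uses the numerical location of their zeros. For small $t$, the integrand is positive because $\cos\theta$ lies to the right of the largest zero of $C_n^\lambda$, and one invokes explicit upper bounds on that zero. The two regions overlap once $n$ is moderately large; finitely many small $n$ are then checked from the closed form. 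None of this is captured by ``monotonicity of a Taylor-remainder function,'' and without it the cases $\lambda=2,3$ remain open in your proposal.
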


\bigskip
The observant reader will have noticed that the case $d=2$,  that is the case of the circle $\bbS^1$,
 does not appear in 
Theorem~\ref{th:polya_criterion}. This is because this particular case does  not fit the general pattern for strict positive
definiteness.
Regarding positive definiteness Gneiting~\cite{Gn98} has shown
\begin{thm}
Suppose the function $g(t)$, defined for $t\in  [-K,K]$, has the following properties:
\begin{itemize}
\item[(i)] $g(t)$ is real--valued, even, and continuous,
\item[(ii)] $g(0)=1$,
\item[(iii)] $ \displaystyle \int_{-K}^K g(t) \, dt \geq 0$,
\item[(iv)] $g(t)$ is nonincreasing and convex for $t \in [0,K]$.
\end{itemize}
Then $g(t)$, $t \in[-K,K]$, is a correlation function on the circle circumference $2K$.
\end{thm}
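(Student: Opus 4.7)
The plan is to show that the $2K$-periodic even extension of $g$ has only nonnegative Fourier cosine coefficients; by the classical Fourier characterization of positive definite functions on the circle, combined with (i) and (ii), this identifies $g$ as a correlation function on the circle of circumference $2K$.

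The engine is a decomposition of $g$ on $[0,K]$ as a nonnegative mixture of tent functions. Condition (iv) implies that the left derivative $g'(u^-)$ exists on $(0,K]$, is nondecreasing, and is nonpositive; so $h(u) := -g'(u^-) \ge 0$ is nonincreasing, and the second distributional derivative of $g$ is a nonnegative Borel measure $d\nu$ on $(0,K)$. Writing $g(t) = g(K) + \int_t^K h(u)\,du$, expanding $h(u) = h(K) + \int_u^K d\nu(s)$, and applying Fubini yields
$$
g(t) = g(K) + h(K)(K - t) + \int_{(t,K]} (s-t)\, d\nu(s), \qquad t \in [0,K],
$$
which by evenness extends on $[-K,K]$ to
$$
g(t) = g(K) + h(K)(K - |t|) + \int_{(0,K]} (s - |t|)_+ \, d\nu(s).
$$
A direct integration by parts with $\omega_n := n\pi/K$ gives, for $n \ge 1$ and every $s \in (0,K]$,
$$
\int_{-K}^{K} (s - |t|)_+ \cos(\omega_n t)\, dt = \frac{2(1 - \cos(\omega_n s))}{\omega_n^2} = \frac{4 \sin^2(\omega_n s/2)}{\omega_n^2} \ge 0,
$$
and the constant $g(K)\cdot 1$ contributes only to the $n = 0$ coefficient. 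Since $h(K) \ge 0$ and $d\nu \ge 0$, integrating the preceding display against $\cos(\omega_n t)$ term by term shows $a_n \ge 0$ for every $n \ge 1$; the coefficient $a_0$ is a positive multiple of $\int_{-K}^K g(t)\,dt$, which is nonnegative by (iii). This finishes the proof.

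The main technical point is the measure-theoretic handling of $g''$ when $g$ is only convex, not $C^2$. The classical facts that a convex function on an open interval has monotone one-sided derivatives everywhere and that its second distributional derivative is a nonnegative Radon measure furnish exactly what is needed; the Fubini interchange is then routine, since $d\nu$ is a finite measure on $[\varepsilon, K]$ for every $\varepsilon > 0$ and the tent kernel $(s - |t|)_+$ is jointly continuous and bounded. Beyond this bookkeeping, no serious obstacle arises: the whole argument is a Choquet-style reduction to the single observation that the Fourier transform of a tent is $\mathrm{sinc}^2$-like, hence nonnegative.
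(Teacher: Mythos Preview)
The paper does not supply a proof of this statement: it is quoted as a result of Gneiting and stated without argument, so there is no proof in the paper to compare your attempt against directly.

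That said, your argument is correct. The representation of a continuous convex function on $[0,K]$ as a constant plus a nonnegative superposition of tents $(s-|t|)_+$ is standard, and the key computation
\[
\int_{-K}^{K} (s-|t|)_+ \cos(\omega_n t)\,dt \;=\; \frac{2\bigl(1-\cos(\omega_n s)\bigr)}{\omega_n^2} \;\ge\; 0
\]
is exactly right. One small point worth making explicit: the Fubini interchange is justified not merely because $d\nu$ is finite on each $[\varepsilon,K]$, but because evaluating your decomposition at $t=0$ gives $\int_{(0,K]} s\,d\nu(s) = g(0)-g(K)-h(K)K < \infty$, whence $\int_{(0,K]}\int_{-K}^{K}(s-|t|)_+\,dt\,d\nu(s) = \int_{(0,K]} s^2\,d\nu(s) \le K\int_{(0,K]} s\,d\nu(s) < \infty$.

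Your proof is in fact the $\lambda=0$ prototype of the method the paper develops in Section~2 for $\lambda\ge 1$: there the tent $(s-\theta)_+$ is replaced by $(s-\theta)_+^{\lambda+1}$, and the elementary positivity $1-\cos(\omega_n s)\ge 0$ becomes the hard inequality $F_n^\lambda(s)>0$ of Proposition~\ref{propn:proven_cases}. The paper itself notes that $F_n^0(t)$ is a multiple of $1-\cos(nt)$, which is precisely your key computation in disguise.
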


In the case $d=2$, that is $\lambda=0$, the function  $F^0_n(t)$ is a multiple of $1-\cos(nt)$. Therefore, it is not positive on all
points $t \in (0,\pi]$, but rather has zeros at the points $t=2k\pi/n $, $0 < k \leq \lfloor n/2 \rfloor$. Consequently, in order to guarantee all the Gegenbauer coefficients of $g$ are strictly positive, as part of showing $g$ is strictly positive definite,  we have to assume slightly more than was required when $\lambda \in \bbN$. The methods used to show Theorem~\ref{th:polya_criterion}, with obvious modifications, yield the following:

\begin{thm}\label{th:polya_criterion_S1}
Let the real-valued function
$g(\cdot)=f(\cos\cdot)$ on $[0,\pi]$ satisfy the following conditions:
\begin{itemize}
\item[(i)] $g\in C[0,\pi]$, 
\item[(ii)] $\text{supp}(g) \subset [0,\pi)$,
\item[(iii)] the derivative, from the right, $g'(0)$ exists, and is finite,
\item[(iv)] $g$ is convex.
\end{itemize}
Then $g$  is a positive definite function on $\bbS^1$.

\medskip

If,  in addition to the above properties,  $g$, restricted to $(0,\pi)$, does not reduce to a piecewise linear
function with finitely many pieces,  then $g$ is a  strictly positive definite function on $\bbS^1$.
\end{thm}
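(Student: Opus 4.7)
The plan is to reduce both assertions to positivity of the Fourier cosine coefficients of $g$, and then to extract that positivity from the convexity hypothesis via two integrations against the Stieltjes measure $g''$. For $d=2$ the zonal spherical harmonics are cosines, so Schoenberg's Theorem~\ref{th:schoenberg} asserts that $g$ is positive definite on $\bbS^1$ exactly when the cosine coefficients
\begin{equation*}
a_0 \,=\, \tfrac{1}{\pi}\int_0^\pi g(\theta)\,d\theta, \qquad a_n \,=\, \tfrac{2}{\pi}\int_0^\pi g(\theta)\cos(n\theta)\,d\theta \quad (n\geq 1),
\end{equation*}
are all nonnegative, so the goal reduces to a transparent formula for the $a_n$.

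Since $g$ is convex on $[0,\pi]$, its distributional second derivative is a finite nonnegative Borel measure $\mu$ on $(0,\pi)$. The support condition forces $g\equiv 0$ on some $[a,\pi]$ with $a<\pi$, whence $g(\pi)=0$ and $g'(\pi^-)=0$; combined with finiteness of $g'(0^+)$, a double application of the fundamental theorem of calculus---first to $g'(s)=-\mu([s,\pi))$ and then to $g$---yields the representation
\begin{equation*}
g(\theta) \,=\, \int_0^\pi (u-\theta)_+\, d\mu(u), \qquad \theta\in[0,\pi].
\end{equation*}
Convexity together with $g\equiv 0$ near $\pi$ also forces $g\geq 0$, so $a_0\geq 0$ automatically. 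Substituting the representation into the formula for $a_n$ and interchanging integrals by Fubini gives, for $n\geq 1$,
\begin{equation*}
a_n \,=\, \frac{2}{\pi}\int_0^\pi\!\int_0^u (u-\theta)\cos(n\theta)\,d\theta\,d\mu(u) \,=\, \frac{2}{\pi n^2}\int_0^\pi \bigl(1-\cos(nu)\bigr)\,d\mu(u) \,\geq\, 0,
\end{equation*}
the inner integral being the $\lambda=0$ specialisation of $F_n^\lambda(u)$ from~(\ref{defn_Fsuplambda}), evaluated by two integrations by parts. This settles positive definiteness.

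For the strict part, the key observation is that ``$g|_{(0,\pi)}$ is piecewise linear with finitely many pieces'' corresponds exactly to ``$\mu$ is a finite combination of Dirac masses''. Under the non-piecewise-linear hypothesis, $\mu$ therefore has infinite support. For each fixed $n\geq 1$ the nonnegative integrand $1-\cos(nu)$ vanishes on $(0,\pi)$ only on the finite set $\{2\pi k/n:1\leq k\leq \lfloor n/2\rfloor\}$, so $\mu$ must carry positive mass where the integrand is strictly positive, forcing $a_n>0$. A standard Vandermonde argument then closes the proof: for distinct nodes $x_j=e^{i\phi_j}$ and nonzero $c\in\R^N$ one has
\begin{equation*}
c^T\bigl[g(d(x_i,x_j))\bigr]c \,=\, \frac{a_0}{2}\left(\sum_j c_j\right)^2 + \sum_{n\geq 1} a_n\, \left|\sum_j c_j e^{in\phi_j}\right|^2,
\end{equation*}
and vanishing of this form would force $\sum_j c_j e^{in\phi_j}=0$ for $n=1,\dots,N$, contradicting invertibility of the scaled Vandermonde matrix $[e^{in\phi_j}]_{n,j}$.

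The main technical hurdle is the careful measure-theoretic handling of the convex function $g$ near both endpoints, in particular the justification of the representation $g(\theta)=\int_0^\pi(u-\theta)_+\,d\mu(u)$ together with the rigorous application of Fubini; once that is in place, both the positive-definite and the strict-positive-definite assertions fall out almost by inspection. This route is precisely the $\lambda=0$ shadow of the proof of Theorem~\ref{th:polya_criterion}, but it is self-contained because it bypasses the Gegenbauer machinery, and hence Conjecture~\ref{conj}, entirely.
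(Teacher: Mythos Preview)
Your argument is correct and, since the paper omits the proof (merely pointing to the method of Theorem~\ref{th:polya_criterion} ``with obvious modifications''), it is worth recording how your route differs from that template. The paper's scheme for $\lambda\in\N$ mollifies $g$ to $G_h\in C^{\lambda+2}$, invokes Lemma~\ref{lem:alt_exp_Gegenbauer_coeffs} to express $b_n(G_h)$ as $\int F_n^\lambda\, G_h^{(\lambda+2)}$, and then lets $h\to 0$. You instead exploit that for $\lambda=0$ the distributional second derivative of a convex $g$ is already a nonnegative finite Borel measure $\mu$, obtain the closed representation $g(\theta)=\int(u-\theta)_+\,d\mu(u)$, and apply Fubini directly. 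This bypasses the mollification step entirely and is cleaner here, particularly for the strict part: the paper's argument for strict positivity in Theorem~\ref{th:polya_criterion} uses that $F_n^\lambda$ is bounded below by a positive constant on a subinterval $[a,\pi]$, which fails for $F_n^0(u)=n^{-2}(1-\cos nu)$ because of its interior zeros; your measure formulation handles this transparently via the observation that an infinitely supported $\mu$ cannot be concentrated on the finite zero set of $1-\cos(n\,\cdot\,)$. The concluding Vandermonde computation is a self-contained substitute for the citation of \cite{Xu92}.

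Two small points to tidy. First, with your normalisation $a_0=\pi^{-1}\int_0^\pi g$ and $a_n=2\pi^{-1}\int_0^\pi g\cos(n\cdot)$ one has $g(\theta)=a_0+\sum_{n\ge1}a_n\cos n\theta$, so the quadratic form reads $a_0(\sum_j c_j)^2+\sum_{n\ge1}a_n|\sum_j c_j e^{in\phi_j}|^2$, not $\tfrac{a_0}{2}(\sum_j c_j)^2+\cdots$; this does not affect the conclusion. Second, Schoenberg's theorem requires $\sum_n a_n<\infty$, which you do not state explicitly; it follows at once from your formula, since $a_n\le 4\mu((0,\pi))/(\pi n^2)$ and $\mu((0,\pi))=-g'(0^+)$ is finite by hypothesis~(iii).
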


We omit the proof.

\bigskip
{\em Notation:} In the body of the paper many expressions will   occur with removable singularities, consider for example 
(\ref{connection2}) with
$\lambda - \mu$ a negative integer. We interpret such expressions in the usual way, as
the  value of the limit.

\section{A Polya criteria for $\bbS^{d-1}$}

In this section we give a proof of  Theorem~\ref{th:polya_criterion} assuming the results regarding the positivity of the Gegenbauer coefficients of the functions $(t-\theta)^\mu_{+}$, $\mu=\lceil \lambda+1 \rceil$, listed in  Proposition~\ref{propn:proven_cases}. 

\medskip
Recall that the Gegenbauer expansion of a function $g = f(\cos \cdot)$ is
$ g \sim \sum a_n C^\lambda_n$, where $a_n = b_n/h_n$,
\begin{equation}
b_n = b_n(g) =b_n^{\lambda}(g) = \int_{-1}^1 f(x) C_n^{\lambda}(x) w_\lambda(x) \, dx 
       = \int_{0}^\pi f(\cos \theta)\, C_n^\lambda (\cos \theta) \left( \sin \theta \right)^{d-2} 
\,  d\theta, \label{eq:bn}
\end{equation}
and
\begin{equation} \label{eq:hn}
h_n = h^\lambda_n = \int_{-1}^1 \left\{C^\lambda_n(x) \right\}^2 w_\lambda(x) \,  dx > 0. 
\end{equation}

We first give an alternative expression for the coefficients, $\{b_n\}$, which will provide 
a major part of a proof of
the conjecture when the function $g$ has two more continuous derivatives than is assumed in  Theorem~\ref{th:polya_criterion}.
\begin{lem} \label{lem:alt_exp_Gegenbauer_coeffs}
If $g\in C^{\lambda+2}[0, \pi)$ is identically zero in a neighbourhood of $\pi$
then the coefficients, $\{b_n\}$, defined in (\ref{eq:bn}), are
alternatively given by the expression
\begin{equation}
b_n (g) =  \frac{(-1)^{\lambda+2}}{(\lambda+1)!}\,\int_0^\pi 
F_n^\lambda (\tau)  \; g^{(\lambda+2)}(\tau)\, d\tau.    \label{eq:geg_coeff_in_terms_of_F} 
\end{equation}
\end{lem}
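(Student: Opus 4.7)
The plan is to recover $g$ from $g^{(\lambda+2)}$ by Taylor's formula with integral remainder, substitute the resulting representation into the definition (\ref{eq:bn}) of $b_n(g)$, and then exchange the order of integration so that the inner integral is exactly $F_n^\lambda(\tau)$ as defined in (\ref{defn_Fsuplambda}).

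Since $g$ vanishes in a neighbourhood of $\pi$ and lies in $C^{\lambda+2}[0,\pi)$, every boundary value $g^{(k)}(\pi)$ with $0\le k\le \lambda+2$ equals zero. Taylor's theorem with integral remainder, based at $\pi$, therefore gives
\begin{equation*}
g(\theta) \, = \, \frac{(-1)^{\lambda+2}}{(\lambda+1)!} \int_\theta^\pi (\tau-\theta)^{\lambda+1}\, g^{(\lambda+2)}(\tau)\, d\tau, \qquad \theta\in[0,\pi],
\end{equation*}
the sign coming from reversing the limits of the usual remainder and the identity $(\theta-\tau)^{\lambda+1}=(-1)^{\lambda+1}(\tau-\theta)^{\lambda+1}$.

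Substituting this expression into $b_n(g)=\int_0^\pi g(\theta)\, C_n^\lambda(\cos\theta)(\sin\theta)^{2\lambda}\, d\theta$ produces a double integral over the triangle $\{(\theta,\tau):0\le\theta\le\tau\le\pi\}$. Since $g^{(\lambda+2)}$ is continuous with compact support in $[0,\pi)$ and the remaining factors are continuous and bounded on that triangle, Fubini's theorem applies. Exchanging the order of integration then yields
\begin{equation*}
b_n(g) \, = \, \frac{(-1)^{\lambda+2}}{(\lambda+1)!} \int_0^\pi g^{(\lambda+2)}(\tau)\left[\int_0^\tau (\tau-\theta)^{\lambda+1} C_n^\lambda(\cos\theta)(\sin\theta)^{2\lambda}\, d\theta\right]\, d\tau,
\end{equation*}
and the bracketed inner integral is precisely $F_n^\lambda(\tau)$ by (\ref{defn_Fsuplambda}).

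There is no significant obstacle here; the argument is a routine Taylor-remainder plus Fubini calculation. An equivalent route is to integrate by parts $\lambda+2$ times in the original expression for $b_n(g)$, after noting that $F_n^\lambda$ and its first $\lambda+1$ derivatives vanish at $0$ while its $(\lambda+2)$-th derivative equals $(\lambda+1)!\, C_n^\lambda(\cos t)(\sin t)^{2\lambda}$; the boundary contributions at $\theta=0$ then vanish by these identities and those at $\theta=\pi$ vanish by the support hypothesis on $g$. The only mild care required is to track these boundary evaluations systematically.
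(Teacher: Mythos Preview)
Your proof is correct and follows essentially the same route as the paper: Taylor expansion of $g$ about $\pi$ with integral remainder (the polynomial part vanishing since $g\equiv 0$ near $\pi$), substitution into the definition of $b_n$, and Fubini to identify the inner integral as $F_n^\lambda(\tau)$. Your explicit justification of Fubini and the remark on the equivalent integration-by-parts route are nice touches but do not change the underlying argument.
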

{\em Proof:}
Applying Taylor's theorem with integral remainder
\begin{align*}
g(\theta)  & =  f(\cos \theta)\\
& =\sum_{k=0}^{\lambda+1} \frac{g^{(k)}(\pi)}{k!} (\theta-\pi)^k
\, + \,
\frac {1}{(\lambda+1)!}\,\int_\pi^\theta g^{(\lambda+2)}(\tau)(\theta-\tau)^{\lambda+1} \, d\tau \\
& =
\sum_{k=0}^{\lambda+1} \frac{(-1)^k g^{(k)}(\pi)}{k!} (\pi-\theta)^k
\, + \,
\frac{(-1)^{\lambda+2}}{(\lambda+1)!}\,\int_\theta^\pi g^{(\lambda+2)}(\tau)(\tau-\theta)^{\lambda+1}\, d\tau,
\quad \theta\in [0,\pi].
\end{align*}
Therefore,
\begin{align*}
b_n & =  \sum_{k=0}^{\lambda+1} \frac{(-1)^k g^{(k)}(\pi)}{k!} \int_0^\pi (\pi-\theta)^k \,
 C_n^\lambda(\cos \theta) \left( \sin \theta \right)^{2\lambda} \, d\theta
\nonumber \\
&\mbox{} \hspace{3ex} \ + \, \frac{(-1)^{\lambda+2}}{(\lambda+1)!}\,\int_0^\pi \int_\theta^\pi g^{(\lambda+2)}(\tau)(\tau-\theta)^{\lambda+1}\, d\tau \;  \; C_n^\lambda(\cos \theta) \left( \sin \theta \right)^{2\lambda} \, d\theta
\end{align*}
Since all the derivatives at $\pi$ vanish,
\begin{align*}
b_n & = \frac{(-1)^{\lambda+2}}{(\lambda+1)!}\,\int_0^\pi \int_0^\tau
(\tau-\theta)^{\lambda+1}\, C_n^\lambda(\cos \theta) \left( \sin \theta \right)^{2\lambda}  d\theta \; \; g^{(\lambda+2)}(\tau)\, d\tau \nonumber \\
& =  \frac{(-1)^{\lambda+2}}{(\lambda+1)!}\,\int_0^\pi 
F_n^\lambda (\tau)  \; g^{(\lambda+2)}(\tau)\, d\tau   . \qquad \qquad \qquad \Box
\end{align*}

\medskip
{\em Proof of  Theorem~\ref{th:polya_criterion}:}

Restrict attention, for the moment, to the case of  $d \in \{4,6,8\}$.

In order to apply the lemma we first have to mollify the function $g$.

\medskip
Given $g \in C^\lambda [0,\pi)$, which is zero in a neigbourhood of $\pi$, we extend the definition of
$g$ to $[0,\infty)$ by taking $g(x)=0$ for all $x \geq \pi$. 
For $h > 0$ define $G_h :[0,\infty) \rightarrow \R$ by 
\begin{equation} \label{eq:defGh}
G_h(x)= \frac{1}{h^2} \int_0^h \int_0^h g(x+u+v) \, du \, dv.
\end{equation}
Differentiating
$$
G_h^{(\lambda)} (x) = 
\frac{1}{h^2} \int_0^h \int_0^h g^{(\lambda)} (x+u +v) \, du \, dv\\
 = \frac{1}{h^2} \int^{x+h}_x \, \int_0^h g^{(\lambda)} (w+u) \, du \, dw.s
$$
Hence, 
\begin{equation} \label{eq:gh_lambdap1}
G_h^{(\lambda+1)}(x) = \frac{1}{h^2} \int_0^h \triangle_h \, g^{(\lambda)}(x+u)\, du, 
\end{equation}
where $\triangle_h$ is the usual forward difference operator. It follows that $ G^{(\lambda+1)}_h$
 is continuous on
$[0,\infty)$. Further, rewriting (\ref{eq:gh_lambdap1}) as
$$
G_h^{(\lambda+1)} (x) = \frac{1}{h^2} \int_x^{x+h} \left[ g^{(\lambda)}(w+h) - 
g^{(\lambda)}(w) \right] \, dw,
$$ and differentiating,
\begin{equation} \label{eq:gh_lambdap2}
G_h^{(\lambda+2)}(x) = \frac{1}{h^2}  \triangle^2_h \, g^{(\lambda)}(x).
\end{equation}
Hence $G^{(\lambda+2)}_h$ is also continuous on $[0, \infty)$.

\medskip
Clearly $G_h$ is supported in $[0,\pi)$ for all sufficiently small $h>0$. Also, it follows from the uniform continuity of $g$, and (\ref{eq:defGh}),
that $\{ g_h \}$ converges uniformly to $g$ on $[0,\pi]$, as $h \rightarrow 0^+$.
Hence, fixing  $n \in \bbN_0$ the  coefficients $\{b_n(G_h)\}_{h>0}$ converge to $b_n = b_n(g)$, as $h\rightarrow 0^+$.
By hypothesis $\psi:= (-1)^\lambda g^{(\lambda)}$ is a convex function. Therefore, from
(\ref{eq:gh_lambdap2}), $ (-1)^{\lambda+2} G_h^{(\lambda+2)}$  is a nonnegative function.
Hence, since $F_n^\lambda$ is nonnegative, it follows from Lemma~\ref{lem:alt_exp_Gegenbauer_coeffs}
that the coefficient, $b_n = b_n(g)$, and therefore the Gegenbauer coefficient $a_n = b_n/h_n$,
is nonnegative for all $n \in \bbN_0$.

\medskip
We now turn to the question of the boundedness, or otherwise, of $\sum a_n C^\lambda_n(1)$. 

\medskip
We need some standard facts about convex functions, which can be found, for example, in \cite{Royden}.
The function $\psi $ is convex on $[0,\pi)$. Therefore, it is absolutely continuous on 
every closed subset of $(0,\pi)$. The right and
left derivatives of $\psi$ exist at all points, $x\in(0, \pi)$, and are non-decreasing functions of $x$. These two one sided derivatives are continuous except at countably many points, and are equal at any point where one of them is continuous.  Thus the two sided derivative of $\psi$ exists, and is continuous, except 
at countably many points. 

\medskip
By hypothesis the one sided derivative $\psi'(0)$ also exists, and from the convexity
it is a lower bound for the value of $\psi'(x)$ on $[0, \pi)$.  Since $\psi'(\pi)=0$,  $\psi'(x) \leq 0$ 
for all $x \in [0,\pi)$ for which it is defined.
Using the absolute continuity of $\psi$, and defining $\eta_h = (-1)^\lambda G_h^{(\lambda)}$, (\ref{eq:gh_lambdap1})
can be rewritten
$$
   \eta_h'(x) = \frac{1}{h^2} \int_0^h \int_0^h \psi'(x+v+u) \, dv \, du.
$$
Thus $\eta_h'(x)$ is a weighted average of $\psi'$ over the interval $(x, x+2h)$.
Hence, for $0\leq  x$,
\begin{subequations}
\begin{equation} \label{eq:psip_le_etap}
 \psi'(x) \leq \eta'_h(x), \qquad \text{if $\psi'(x)$ exists}, 
\end{equation}
and
\begin{equation} \label{eq:etap_le_psip}
\eta_h'(x) \leq \psi'(x+2h), \qquad \text{if  $\psi'(x+2h)$ exits}.
\end{equation}
\end{subequations}
Then, for $h > 0$,
$$ \int_0^\pi \eta_h''(x) dx = \eta_h'(\pi) - \eta_h'(0) = 0 - \eta_h'(0) \leq - \psi'(0) = (-1)^{\lambda+1} g^{(\lambda+1)}(0).
$$

\medskip
Using Lemma~\ref{lem:alt_exp_Gegenbauer_coeffs}
it follows that
$$ 0 \leq b_n(G_h) \leq \frac{\|F_n^\lambda \|_\infty  | g^{(\lambda+1)}(0)|}{(\lambda+1)!}, $$
where $\| \cdot \|_\infty$ is the maximum norm on $[0,\pi]$.
Taking the limit as $h\rightarrow 0^+$ ,
$$ 0 \leq b_n(g) \leq  \frac{\|F_n\|_\infty  | g^{(\lambda+1)}(0)|}{(\lambda+1)!}. $$
Lemma~\ref{lem:FnbBound} shows that for $\lambda \in \bbN$,
 $\|F_n^\lambda\|_\infty = {\mathcal O}(n^{-3})$ as $n \rightarrow \infty$.
Also, from  \cite[(22.2.3)]{Abromowitz},  
$$ 
h_n = \frac{\pi 2^{1-2\lambda} \Gamma(n+2\lambda)}{n! (n+\lambda) [\Gamma(\lambda)]^2} 
\approx n^{2\lambda-2} \qquad \text{and} \qquad
C^\lambda_n(1) = \binom{n+2\lambda -1}{  n} \approx n^{2\lambda-1}.
$$
It follows from $a_n = b_n/h_n$, and the above, that $|a_n| C^\lambda_n(1) = {\mathcal O}(n^{-2})$.
Hence the Gegenbauer series of $g$ converges with $ \sum |a_n(g)| C^\lambda_n(1) < \infty $.
Combining the convergence of the series with the nonnegativity of  the Gegenbauer coefficients, shown previously, the first part of  Theorem~\ref{th:polya_criterion}
now follows as an application of Theorem~\ref{th:schoenberg}.

\medskip
We now turn to the part of the statement of   Theorem~\ref{th:polya_criterion} concerning strict postive definiteness. 

\medskip

We choose points $0<2a<b <\pi$ so that $\psi$,  restricted to $[2a,b]$, is not a linear polynomial, and
$\psi'(x)$ exists at $x=2a$, and also at $b$. Therefore, $\psi'(2a) < \psi'(b)$. Choose $h_1 >0$ so that
$a< 2a-2h_1  <b+2h_1 < \pi$ and $0 < h < h_1$.

It follows  from (\ref{eq:psip_le_etap}) and (\ref{eq:etap_le_psip}) that 
$ \eta_h'(a-2h)  \leq \psi'(2a)$ and
$\psi'(b)\leq \eta_h'(b)$.
Hence,
$$ \int_a^b \eta_h''(x)\, dx \geq \int_{2a-2h}^b \eta_h'' (x) \, dx = 
\eta_h'(b) - \eta_h'(2a-2h) \geq \psi'(b)-\psi'(2a) >0.
$$
Also $F_n^\lambda(\tau)$ is continuous and positive on the interval $[a, \pi]$.
 Thus, there is a number $\delta_n >0$
such that $F_n^\lambda(\tau) \geq \delta_n$ for all $\tau$ in $[a, \pi]$. 
An application of Lemma~\ref{lem:alt_exp_Gegenbauer_coeffs} now shows that for each $n \in \bbN_0$,  
$b_n(G_h) \geq \delta_n \left\{ \psi'(b)-\psi'(2a)\right\} > 0$. 
Since $0 < h < h_1$ was arbitary, taking the limit, as $h \rightarrow 0^+$, shows
$b_n(g) >0$. Hence all the Gegenbauer coefficients, $a_n = b_n/h_n$,  of $g$ are positive, and the sufficient condition of \cite{Xu92}, discussed in the introduction, shows that $g$ is strictly positive definite on $\bbS^{d-1}$.

\medskip
The above has established both parts of Theorem~\ref{th:polya_criterion} when
$d \in \{4,6,8\}$. However, a function $g=f (\cos \cdot)$ which is (strictly)
positive definite on $\Sdmone$ is necessarily (strictly) positive definite on ${\mathbb S}^{d-2}$. Hence,
Theorem~\ref{th:polya_criterion} for $d\in \{ 4, 6,8\}$ implies
Theorem~\ref{th:polya_criterion} for $d \in\{3,5,7\}$.  \hfill{$\Box$}

\smallskip
We note that if the result of Proposition~\ref{propn:proven_cases} were available for more values
of $d \in 2 \N$ then the above proof would immediately give Theorem~\ref{th:polya_criterion} for more values of $d$.

\medskip
Let us also note that increasing the power on $(t-\cdot)_{+}$ in the function
$g(\cdot)=(t-\cdot)^\delta_{+}$ will preserve any existing positive definiteness. More precisely, 
\begin{lem} \label{lem:reduction}
If $F_n^{\lambda, \delta}(t)$ is positive on $(0,\pi]$, except possibly for finitely many points $t$,
then, for all  $\mu > \delta$,  $F_n^{\lambda, \mu}(t)$
is positive on $(0,\pi]$. 
\end{lem}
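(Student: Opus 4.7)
The plan is to express $F_n^{\lambda,\mu}$ as a positive linear combination (really, an integral against a positive measure) of the values $F_n^{\lambda,\delta}(s)$ for $s \in (0,t]$. The mechanism is the classical Beta-function identity
\[
(t-\theta)^\mu \, = \, \frac{1}{B(\delta+1,\mu-\delta)} \int_\theta^t (t-s)^{\mu-\delta-1}\,(s-\theta)^\delta\, ds,
\]
valid whenever $\mu>\delta>-1$, which is an instance of the fractional-integration-by-parts semigroup property underlying Riemann--Liouville integrals.

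First I would substitute this identity into the definition (\ref{eq:define_F}) of $F_n^{\lambda,\mu}(t)$. Since the integrand is nonnegative in absolute value over the compact triangle $\{(\theta,s):0\le\theta\le s\le t\}$, and the Gegenbauer factor $C_n^\lambda(\cos\theta)(\sin\theta)^{2\lambda}$ is bounded there, Fubini's theorem applies and the double integral can be rearranged as
\[
F_n^{\lambda,\mu}(t) \, = \, \frac{1}{B(\delta+1,\mu-\delta)}\int_0^t (t-s)^{\mu-\delta-1}\,F_n^{\lambda,\delta}(s)\,ds,
\]
where the inner integral over $\theta\in[0,s]$ has been recognised as $F_n^{\lambda,\delta}(s)$.

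Finally, for any fixed $t \in (0,\pi]$, the weight $(t-s)^{\mu-\delta-1}$ is strictly positive on $(0,t)$, and by hypothesis $F_n^{\lambda,\delta}(s)>0$ on $(0,t]$ except at finitely many points. Hence the integrand is nonnegative almost everywhere and strictly positive on a set of positive Lebesgue measure, so the integral is strictly positive. The Beta-function prefactor is positive, so $F_n^{\lambda,\mu}(t)>0$, as claimed.

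There is really no serious obstacle here; the only thing to be careful about is the range of parameters. We need $\mu-\delta>0$ so that the outer weight $(t-s)^{\mu-\delta-1}$ is locally integrable, and $\delta>-1$ so that the original $F_n^{\lambda,\delta}$ is well defined; both are supplied by the hypotheses of Conjecture~\ref{conj}. The invocation of Fubini is routine given continuity of the Gegenbauer factor on $[0,\pi]$.
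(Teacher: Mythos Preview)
Your proof is correct and is essentially the same as the paper's: the paper simply invokes the semigroup property $\CL^{\alpha+\beta}=\CL^{\alpha}\CL^{\beta}$ of Riemann--Liouville fractional integrals, and your Beta-function identity is precisely the computation underlying that property. The resulting representation $F_n^{\lambda,\mu}(t)=\frac{1}{B(\delta+1,\mu-\delta)}\int_0^t (t-s)^{\mu-\delta-1}F_n^{\lambda,\delta}(s)\,ds$ is exactly what the paper's one-line proof encodes.
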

\begin{proof}
This follows from the  the semigroup property, $\CL^{\d+\mu} = \CL^\d \CL^\mu$,
for the fractional integrals
$$
  \CL^\delta f :=  \frac{1}{\Gamma(\d)} \int_0^t (t- \t)^{\d-1} f(\t) d\theta.  \qedhere
$$ 
\end{proof}
This lemma shows the pivotal role of the boundary case involving $F^\lambda_n(t)$.

\section{Proofs of the conjecture in low dimensional cases}

In this section we present proofs of the conjecture in low dimensional cases as detailed in 
Proposition~\ref{propn:proven_cases}.

\medskip

Recall the formula connecting Gegenbauer polynomials with different parameters, \cite[p. 99]{Sz75}. For $\mu > (\l-1)/2$, 
\begin{equation}\label{connection}
   (\sin \t)^{2\mu}\, C_n^\mu(\cos \t) =\sum_{k=0}^\infty c_{k,n}^{\mu,\l}  
       (\sin \t)^{2 \l}\, C_{n+2k}^\l (\cos \t), 
\end{equation}
where
\begin{equation}\label{connection2}
   c_{k,n}^{\mu,\l} = \frac{2^{2\l-2\mu} \Gamma(\l)\Gamma(n+2\mu)}
        {\Gamma(\mu)\Gamma(\l-\mu)}
        \frac{ (n+2k+\l) (n+2k)! \, \Gamma(n+k+\l)\Gamma(k+\l-\mu)}
        {n! \, k! \, \Gamma(n+k+\mu+1)\Gamma(n+2k+2\l)}.
\end{equation}
Note that if $\lambda-\mu$ is a negative integer the connection coefficient $c^{\lambda,\mu}_{k,n}$  is only nonzero
for $0 \leq k \leq \mu - \lambda$.

\begin{lem} \label{lem:cosine}
For $\mu \geq 1$, 
\begin{equation}\label{cosine}
     C_n^\mu (\cos \t) (\sin \t)^{2\mu} = \sum_{k=0}^\infty c_{k,n}^\mu \cos (n + 2 k) \t,
\end{equation}
where 
\begin{align*}
 c_{k,n}^\mu: =  \frac{2^{1-2\mu}(-\mu)_k \Gamma(n+2\mu)\Gamma(n+k) (n+2k)}
          {\Gamma(\mu) n!\, k!\, \Gamma(n + k+\mu+1)}.
\end{align*}
When $\mu \in \NN$ the summation terminates 
at $k=\mu$ and the expression for $c^\mu_{k,n}$ can be rewritten as
\begin{equation} \label{eq:c_mu_k_n_alt} 
c^\mu_{k,n} \; = \; \frac{ 2^{1-2\mu}}{\Gamma(\mu)}  (- 1)^k \binom{\mu}{k}  \frac{ (n+1)_{2\mu-1} (n+2k ) }{(n+k)_{\mu+1}}.
\end{equation}
\end{lem}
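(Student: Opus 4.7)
The plan is to derive (\ref{cosine}) as the $\lambda \to 0^+$ limit of the connection formula (\ref{connection})--(\ref{connection2}), exploiting the classical identification of Chebyshev polynomials as a scaled limit of Gegenbauer polynomials:
$$
\lim_{\lambda \to 0^+} \lambda^{-1} C_m^\lambda(x) \;=\; \frac{2}{m}\, T_m(x), \qquad m \geq 1,
$$
together with $T_m(\cos\theta) = \cos(m\theta)$. The required $1/\lambda$ pole on the right is supplied by the factor $\Gamma(\lambda)$ sitting inside $c_{k,n}^{\mu,\lambda}$, so the two singular behaviours cancel cleanly and the limit of (\ref{connection}) produces a cosine series.

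Concretely, I would first rewrite the $\mu$-dependent gamma ratio as the Pochhammer $\Gamma(k+\lambda-\mu)/\Gamma(\lambda-\mu) = (\lambda-\mu)_k$, which is a polynomial in $\lambda$ with finite limit $(-\mu)_k$. Using $\lambda\Gamma(\lambda) \to 1$, $\Gamma(n+2k+2\lambda) \to \Gamma(n+2k)$, the identity $(n+2k)!/\Gamma(n+2k) = n+2k$, and $\Gamma(n+k+\lambda) \to \Gamma(n+k)$, a direct bookkeeping gives
$$
\lim_{\lambda\to 0^+}\; c_{k,n}^{\mu,\lambda}\cdot\lambda\cdot\frac{2}{n+2k} \;=\; c_{k,n}^\mu,
$$
so that term by term $c_{k,n}^{\mu,\lambda}(\sin\theta)^{2\lambda}C_{n+2k}^\lambda(\cos\theta) \to c_{k,n}^\mu \cos((n+2k)\theta)$, and (\ref{cosine}) follows. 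The single degenerate term with $n = k = 0$, where $C_0^\lambda = 1$ does not vanish in the limit, is handled separately via the duplication formula for $\Gamma$, which matches the $0\cdot\infty$ appearing in the stated expression for $c_{0,0}^\mu$.

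For $\mu \in \NN$ the Pochhammer $(-\mu)_k$ vanishes once $k > \mu$, so the series terminates at $k = \mu$ as asserted. To rewrite $c_{k,n}^\mu$ in the form (\ref{eq:c_mu_k_n_alt}), I substitute $(-\mu)_k/k! = (-1)^k\binom{\mu}{k}$ and apply the Pochhammer identities $\Gamma(n+2\mu)/\Gamma(n+1) = (n+1)_{2\mu-1}$ and $\Gamma(n+k+\mu+1)/\Gamma(n+k) = (n+k)_{\mu+1}$.

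The main obstacle is little more than careful bookkeeping at the coincident singularities in (\ref{connection2}) as $\lambda \to 0^+$: the poles of $\Gamma(k+\lambda-\mu)$ paired with $\Gamma(\lambda-\mu)$ when $\mu \in \NN$ and $k \leq \mu$, and the poles of $\Gamma(\lambda)^2$ paired with $\Gamma(2\lambda)$ in the exceptional term $n = k = 0$. All such indeterminacies disappear once the ratios are read as limits, in the spirit of the notational convention announced at the end of Section 1, so no deeper analytic work is required beyond the term-by-term computation; for non-integer $\mu \geq 1$ one additionally needs a routine dominated convergence argument to justify passing the limit under the infinite sum.
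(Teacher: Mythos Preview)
Your approach is correct and yields the stated formula, but it differs from the paper's own proof. The paper specializes the connection formula \eqref{connection}--\eqref{connection2} at the \emph{integer} value $\lambda=1$, where $C_m^1(\cos\theta)=\sin((m+1)\theta)/\sin\theta$ is the Chebyshev polynomial of the second kind; this gives a sine series, and the product-to-sum identity $\sin((n+2k+1)\theta)\sin\theta=\tfrac12\bigl(\cos((n+2k)\theta)-\cos((n+2k+2)\theta)\bigr)$ followed by a reindexing produces the cosine series with $c_{k,n}^\mu=\tfrac12\bigl(c_{k,n}^{\mu,1}-c_{k-1,n}^{\mu,1}\bigr)$. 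You instead send $\lambda\to 0^+$, so that $\lambda^{-1}C_m^\lambda\to\tfrac{2}{m}T_m$ and cosines appear directly, the $1/\lambda$ being absorbed by the $\Gamma(\lambda)$ in $c_{k,n}^{\mu,\lambda}$.

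Each route has a small price. The paper stays at a regular parameter value and never needs limits or a separate $n=k=0$ case, at the cost of an extra telescoping step and the computation of the difference $c_{k,n}^{\mu,1}-c_{k-1,n}^{\mu,1}$. Your argument avoids that telescoping and reads off $c_{k,n}^\mu$ in one stroke, but must track the coincident singularities $\Gamma(\lambda)$ versus $C_{n+2k}^\lambda\sim\lambda$, the degenerate term $n=k=0$ via the duplication formula, and (for non-integer $\mu$) justify interchanging limit and sum. Both are short and elementary; the paper's choice has the mild advantage that it sidesteps all limiting arguments, while yours is arguably more transparent as to \emph{why} cosines, rather than sines, appear.
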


\begin{proof} 
Recall that $C_m^1(\cos \t) = U_n(\cos \t) = \sin (n+1)\t/\sin \t$. From the case 
$\l =1$ of \eqref{connection} we deduce that 
\begin{align*}
 C_n^\mu (\cos \t) (\sin \t)^{2\mu} & = \sum_{k=0}^\infty c_{k,n}^{\mu,1} 
    \sin(n+2k+1)\t \sin \t \\
    & = \frac12 \sum_{k=0}^\infty c_{k,n}^{\mu,1}  (\cos(n+2k)\t - \cos (n+2k+2)\t \\
    & = \frac12 \sum_{k=0}^\infty (c_{k,n}^{\mu,1} - c_{k-1,n}^{\mu,1}) \cos(n+2k)\t, 
\end{align*}
so that 
$c_{k,n}^\mu = \frac{1}{2}c_{k,n}^{\mu,1} - \frac{1}{2}c_{k-1,n}^{\mu,1}$,
 the explicit formula of which 
is deduced from \eqref{connection2}. 
\end{proof}

For $\mu$  a positive integer,  equation \eqref{cosine} can also be deduced from the 
following relation for Gegenbauer polynomials, 
\begin{equation}\label{eq2}
    (1-x^2) C_k^{\l+1} (x) = \frac{(k+2\l+1)(k+2 \l)}{4\l (k+\l+1)} C_k^\l(x)
        - \frac{(k+2)(k+1)}{4\l (k+\l+1)} C_{k+2}^\l (x). 
\end{equation}
The equation \eqref{cosine} allows us to write down an explicit formula for $F_k^\l(t)$.

\begin{lem} \label{lem:Fn}
For  $\mu =1,2,3, \ldots$, 
\begin{align*}
&   F_n^{2\mu-1}(t) = \sum_{k=0}^{2 \mu-1} c_{k,n}^{2\mu-1} 
   \frac{(-1)^{\mu} (2\mu)!}{(n+2k)^{2\mu+1}} \left[ \sin (n+2k) t - \sum_{j=0}^{\mu-1} (-1)^j 
       \frac{(n+2k)^{2j+1}}{(2j+1)!}  t^{2j+1} \right], \\
&  F_n^{2\mu}(t) = \sum_{k=0}^{2 \mu} c_{k,n}^{2\mu} 
   \frac{(-1)^{\mu+1} (2\mu+1)!}{(n+2k)^{2\mu+2}} \left[ \cos (n+2 k) t - \sum_{j=0}^\mu (-1)^j 
  \frac{(n+2k)^{2j}}{(2j)!} t^{2j} \right].
\end{align*}
\end{lem}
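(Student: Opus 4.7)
The plan is to reduce $F_n^\lambda(t)$ to a finite sum of elementary integrals by appeal to Lemma~\ref{lem:cosine}. Since here $\lambda$ is a positive integer, the sum in \eqref{cosine} terminates at $k=\lambda$, giving
\[
F_n^\lambda(t) \;=\; \sum_{k=0}^{\lambda} c_{k,n}^{\lambda} \int_0^t (t-\theta)^{\lambda+1}\cos\bigl((n+2k)\theta\bigr)\,d\theta.
\]
Everything then turns on evaluating a single model integral
$I_m(t,N) := \int_0^t (t-\theta)^m \cos(N\theta)\,d\theta$
for $m=\lambda+1$ and $N=n+2k$, after which one substitutes back.

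To compute $I_m(t,N)$, I would substitute the Maclaurin expansion $\cos(N\theta)=\sum_{i\ge 0}(-1)^i (N\theta)^{2i}/(2i)!$ (convergent uniformly on $[0,t]$), interchange sum and integral, and apply the Beta integral $\int_0^t (t-\theta)^m \theta^{2i}\,d\theta = m!\,(2i)!\, t^{m+2i+1}/(m+2i+1)!$. Pulling out a factor of $N^{-(m+1)}$ yields the compact form
\[
I_m(t,N) \;=\; \frac{m!}{N^{m+1}} \sum_{i=0}^{\infty} \frac{(-1)^i (Nt)^{m+2i+1}}{(m+2i+1)!}.
\]
As a sanity check, $m$-fold integration by parts produces the same expression through the standard polynomial-plus-sinusoid recursion, with the polynomial part coming from the boundary terms at $\theta=0$.

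Finally, split on the parity of $\lambda$. When $\lambda = 2\mu-1$, one has $m=2\mu$, and the exponents $m+2i+1 = 2(\mu+i)+1$ are odd; re-indexing $j=\mu+i$ and factoring $(-1)^\mu$ identifies the series above as $(-1)^\mu\bigl[\sin(Nt) - \sum_{j=0}^{\mu-1}(-1)^j (Nt)^{2j+1}/(2j+1)!\bigr]$, the truncated tail of $\sin$, which after substitution into the expansion of $F_n^\lambda$ reproduces the first claim. When $\lambda=2\mu$, one has $m=2\mu+1$ and the exponents $m+2i+1 = 2(\mu+i+1)$ are even, so the same re-indexing identifies the series as $(-1)^{\mu+1}\bigl[\cos(Nt) - \sum_{j=0}^{\mu}(-1)^j (Nt)^{2j}/(2j)!\bigr]$, giving the second claim. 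There is no substantive obstacle here; the lemma is a direct calculation, and the only point requiring real care is the sign and index bookkeeping when identifying the truncated Maclaurin polynomial in each parity case.
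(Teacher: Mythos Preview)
Your proposal is correct and follows essentially the same approach as the paper: both reduce $F_n^\lambda(t)$ via Lemma~\ref{lem:cosine} to a finite sum of integrals $\int_0^t (t-\theta)^{\lambda+1}\cos\bigl((n+2k)\theta\bigr)\,d\theta$ and then evaluate these. The only cosmetic difference is that the paper recognises this integral directly as the Taylor remainder with integral form for $\cos(Nt)$ (when $\lambda$ is even) or $\sin(Nt)$ (when $\lambda$ is odd), whereas you expand $\cos(N\theta)$ in its Maclaurin series, integrate term by term via the Beta integral, and then identify the resulting series as the same truncated tail; the two computations are equivalent.
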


\begin{proof}
It follows from Taylor's theorem with integral remainder that
\begin{align*}
 \cos k t - \sum_{j=0}^\mu (-1)^j 
  \frac{(k t)^{2j}}{(2j)!}
& \ = \ (-1)^{\mu+1} \frac{k ^{\l+2}}{(\l+1)!}\int_0^t (t-\t)^{\l+1} \cos k \t \, d\t , \quad \l = 2 \mu,  \\
 \sin k t - \sum_{j=0}^{\mu-1} (-1)^j  
  \frac{(k t)^{2j+1}}{(2j+1)!} 
& \ =\   (-1)^{\mu} \frac{k ^{\l+2}}{(\l +1)!}  \int_0^t (t-\t)^{\l+1} \cos k \t \, d\t , \quad \l = 2 \mu-1. 
\end{align*}
Consequently, together with the identity in  Lemma~\ref{lem:cosine}, we obtain 
an explicit formula for $F_n^\l(t)$. 
\end{proof}

\extras{
More explicitly, for the first few values of $\lambda$   Lemma~\ref{lem:Fn} yields
\begin{equation} \label{eq:F1}
 F_n^1(t) =  \sum_{k=0}^1
 d_{k,n} \left\{ \sin ((n+2k)t) -  (n+2k)t  \right\},
\end{equation}
where
$$ d_{0,n} = \frac{-1}{n^3} \qquad \text{and} \qquad d_{1,n}= \frac{1}{(n+2)^3},
$$

 \begin{rem} \label{Fn_space} For $\mu \in \bbN$ define $ {\mathcal S}_{2\mu-1,n} =\{ \sin(nt),  \sin((n+2)t), \ldots , \sin((n+4\mu-2)t), t, t^3, \ldots, t^{2\mu-1}\}$, and ${\mathcal  S}_{2\mu,n}=\{\cos(nt), \cos((n+2)t), \ldots, \cos((n+4\mu)t), 1, t^2, \ldots t^{2\mu}\}$. 
It follows from Lemma~\ref{lem:Fn} and equation~(\ref{eq:c_mu_k_n_alt}) that
$F^{\lambda}_n$ lies in $\text{span}\left({\mathcal S}_{\lambda,n}\right)$.
Also the
coefficients  which occur in expressing $F^\lambda_n$ in terms of the functions in ${\mathcal S}_{\lambda,n}$,
are rational functions of $n$.
\end{rem}
}

Prototypical special cases are,
\begin{equation}
  \frac{4}{3}\,F_n^2(t)  =  \sum_{k=0}^2 \frac{e_{k,n}}{ (n+2k)^4}
\left\{ \cos((n+2k)t)- \left[ 1 -\frac{(n+2k)^2 t^2}{2}\right] \right\}  \label{eq:F2},
\end{equation}
where
$$
e_{0,n}= (n+3), \quad e_{1,n}= -2(n+2) \quad \text{and}
\quad e_{2,n}= (n+1),
$$
and
\begin{align}
  \frac{8}{3}\,F_n^3(t) & =  \sum_{k=0}^3
\frac{f_{k,n}}{(n+2k)^5} \left\{ \sin ((n+2k)t) - \left[ (n+2k)t - \frac{(n+2k)^3 \,  t^3}{6} \right] \right\},
\label{eq:F3} \end{align}
where
$$
f_{0,n} = (n+5)(n+4), \ f_{1,n}= -3 (n+5)(n+2), \ f_{2,n}= 3(n+4)(n+1)\ \text{and}
\ f_{3,n}= -(n+2)(n+1).
$$

\bigskip 

Let us write \eqref{eq2} as  
\begin{equation}\label{eq3}
    \frac{2(k+\l+1)}{2\l+1} (1-x^2)\frac{C_k^{\l+1}(x)}{C_k^{\l+1}(1)} 
   =   \frac{C_k^{\l}(x)}{C_k^{\l}(1)} - \frac{C_{k+2}^{\l}(x)}{C_{k+2}^{\l}(1)}.
\end{equation}
For $0 \le \d \le \l$ define 
\begin{equation}\label{Gnlambda}
  G_n^{\l,\d}(t): = \frac{F_n^{\l,\d+1}(t)}{C_n^\l(1)} = \int_0^t (t-\t)^{\d+1} \frac{C_n^\l(\cos \t)}{C_n^\l(1)} (\sin \t)^{2\l}  d\t. 
\end{equation}
Then the following relation follows immediately from \eqref{eq3}: 

\begin{lem} 
For $\l \in \NN$ and $0\le \d \le \l$, 
\begin{equation}\label{inductG}
    \frac{2}{2\l-1}  G_n^{\l,\d}(t) = \frac{1}{n+\l} 
         \left[G_n^{\l-1,\d}(t) - G_{n+2}^{\l-1,\d}(t)\right]. 
\end{equation}
\end{lem}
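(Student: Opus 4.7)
The plan is to derive (\ref{inductG}) directly from (\ref{eq3}) by a relabelling, a trigonometric substitution, multiplication by the appropriate power of $\sin\tau$, and then integration against $(t-\tau)^{\delta+1}$. No new identity or inequality is needed; everything is bookkeeping, and I expect there to be no real obstacle.

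First I would reindex (\ref{eq3}) by replacing $\lambda$ with $\lambda-1$ and $k$ with $n$, to bring the normalized Gegenbauer polynomial $C_n^\lambda/C_n^\lambda(1)$ to the left-hand side:
\begin{equation*}
\frac{2(n+\lambda)}{2\lambda-1}\,(1-x^2)\,\frac{C_n^\lambda(x)}{C_n^\lambda(1)}
\;=\;\frac{C_n^{\lambda-1}(x)}{C_n^{\lambda-1}(1)}\;-\;\frac{C_{n+2}^{\lambda-1}(x)}{C_{n+2}^{\lambda-1}(1)}.
\end{equation*}
Next I would substitute $x=\cos\tau$, so $1-x^2=\sin^2\tau$, and multiply both sides by the weight factor $(\sin\tau)^{2(\lambda-1)}$. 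This converts the left-hand side into $\frac{2(n+\lambda)}{2\lambda-1}(\sin\tau)^{2\lambda}\,C_n^\lambda(\cos\tau)/C_n^\lambda(1)$, which is precisely the integrand of $G_n^{\lambda,\delta}$ (apart from the factor $(t-\tau)^{\delta+1}$), and the right-hand side into the difference of the corresponding integrands for $G_n^{\lambda-1,\delta}$ and $G_{n+2}^{\lambda-1,\delta}$.

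Finally I would multiply through by $(t-\tau)^{\delta+1}$, integrate $\tau$ over $[0,t]$, and use the definition (\ref{Gnlambda}). The linearity of the integral turns the identity into
\begin{equation*}
\frac{2(n+\lambda)}{2\lambda-1}\,G_n^{\lambda,\delta}(t)
\;=\;G_n^{\lambda-1,\delta}(t)\;-\;G_{n+2}^{\lambda-1,\delta}(t),
\end{equation*}
and dividing by $n+\lambda$ yields (\ref{inductG}). The integrals all converge because $0\le\delta\le\lambda$ guarantees integrability of $(t-\tau)^{\delta+1}$ against the bounded, continuous weight $(\sin\tau)^{2(\lambda-1)}\,C_m^{\lambda-1}(\cos\tau)$, so no convergence subtlety arises.
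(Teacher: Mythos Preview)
Your argument is correct and is exactly the derivation the paper has in mind: the paper simply states that the relation ``follows immediately from \eqref{eq3}'', and you have spelled out that immediate step (reindex, set $x=\cos\tau$, multiply by $(\sin\tau)^{2(\lambda-1)}(t-\tau)^{\delta+1}$, integrate). The only remark is that the convergence comment is unnecessary---the integrand is continuous on $[0,t]$ for any $\delta\ge 0$---and that the case $\lambda-1=0$ is to be read via the limiting convention the paper already adopts.
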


Let us start from $\l =0$ and make a change of variable in the integral to obtain
$$
   G_n^{0,\d}(t)  = \int_0^t (t-\t)^{\d+1} \cos n \t d\t  \\
         =  t^{\d+2}\int_0^1(1-s)^{\d+1} \cos (n t s) ds = t^{\d+2} h_\d(n t), 
$$
where 
\begin{equation} \label{eq:h_delta}
     h_\d(u) := \int_0^1(1-s)^{\d+1} \cos (u s) ds. 
\end{equation}

Let us define
\begin{equation} \label{eq:H_1}
 H_1^\d(n, \xi_1,t): = h_\delta'((n+\xi_1)t)
\end{equation}
 and, for $j =1,2,\ldots$, define inductively 
\begin{equation} \label{eq:defHj}
   H_{j+1}^\d (n,\xi_{j+1}, \xi_j, \ldots, \xi_1,t) : =  \frac{\partial}{\partial \xi_{j+1}} 
        \left[\frac{H_{j}^\d(n+\xi_{j+1}, \xi_j, \ldots , \xi_1,t)}{n+j+\xi_{j+1}} \right].
\end{equation}

\begin{lem}
For $j = 1, 2, \ldots, $, 
\begin{align} \label{induct-main}
   \frac{2^j}{(2j -1)!!} G_n^{j,\d}(t)   = (-1)^{j} \frac{t^{\d+3}}{n+j}  
      \int_{[0,2]^j} H_{j}^\d(n,\xi_j ,\ldots, \xi_1,t) d\xi_j \cdots d\xi_1.  
\end{align}
\end{lem}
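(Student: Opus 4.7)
The plan is to prove the identity by induction on $j$, starting from the formula $G_n^{0,\delta}(t) = t^{\delta+2} h_\delta(nt)$ derived just before the lemma, and iteratively applying the recurrence (\ref{inductG}). At each step, the forward difference in the parameter $n$ on the right-hand side of (\ref{inductG}) will be rewritten as an integral over $[0,2]$ of a partial derivative in a fresh auxiliary variable $\xi_{k+1}$, matching precisely the recursive definition of $H_{k+1}^\delta$ in (\ref{eq:defHj}).

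For the base case $j=1$, I would apply (\ref{inductG}) with $\lambda=1$ to write
\[
2\,G_n^{1,\delta}(t) = \frac{t^{\delta+2}}{n+1}\bigl[h_\delta(nt)-h_\delta((n+2)t)\bigr],
\]
and then use the fundamental theorem of calculus,
\[
h_\delta(nt)-h_\delta((n+2)t) = -t\int_0^2 h_\delta'\bigl((n+\xi_1)t\bigr)\,d\xi_1 = -t\int_0^2 H_1^\delta(n,\xi_1,t)\,d\xi_1,
\]
the last equality being just (\ref{eq:H_1}). Since $(2\cdot 1-1)!!=1$, this matches (\ref{induct-main}) for $j=1$.

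For the inductive step, let $I_j(n,t):=\int_{[0,2]^j}H_j^\delta(n,\xi_j,\ldots,\xi_1,t)\,d\xi_j\cdots d\xi_1$ and write the inductive hypothesis as
\[
G_n^{j,\delta}(t)=\frac{(2j-1)!!}{2^j}(-1)^j\frac{t^{\delta+3}}{n+j}\,I_j(n,t).
\]
Substituting this expression into (\ref{inductG}) with $\lambda$ replaced by $j+1$ reduces the inductive step to rewriting
\[
\frac{I_j(n,t)}{n+j}-\frac{I_j(n+2,t)}{n+2+j}= -\int_0^2 \frac{\partial}{\partial \xi_{j+1}}\!\left[\frac{I_j(n+\xi_{j+1},t)}{n+j+\xi_{j+1}}\right]d\xi_{j+1}.
\]
Pulling $\partial/\partial\xi_{j+1}$ inside the integral defining $I_j$ (legitimate since $h_\delta$ is $C^\infty$ and all integration ranges are compact) and invoking the recursion (\ref{eq:defHj}) identifies the integrand as $H_{j+1}^\delta(n,\xi_{j+1},\xi_j,\ldots,\xi_1,t)$, so the bracket equals $-I_{j+1}(n,t)$. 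The arithmetic $(2j+1)(2j-1)!!=(2j+1)!!$ together with the sign flip $(-1)^j\mapsto(-1)^{j+1}$ then delivers (\ref{induct-main}) with $j$ replaced by $j+1$.

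I do not foresee any real obstacle here; the proof is essentially bookkeeping. The single conceptual ingredient is the standard device of writing a finite difference as an integral of a derivative, which is what allows the single-parameter recursion (\ref{inductG}) to accumulate one new integration variable $\xi_k$ per iteration, thereby converting the telescoping structure of (\ref{inductG}) into the $j$-fold iterated integral appearing on the right-hand side of (\ref{induct-main}).
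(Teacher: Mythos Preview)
Your proposal is correct and follows essentially the same route as the paper's proof: induction on $j$ using \eqref{inductG}, with the base case exactly as in \eqref{eq:Gn_1_delta}, and the inductive step handled by writing the difference $\frac{I_j(n,t)}{n+j}-\frac{I_j(n+2,t)}{n+2+j}$ as an integral over $[0,2]$ of a $\xi_{j+1}$-derivative so that \eqref{eq:defHj} identifies the new integrand as $H_{j+1}^\delta$. Your write-up is actually a bit more explicit than the paper's (which simply says the case $j+1$ ``follows readily''), in particular in justifying the interchange of $\partial/\partial\xi_{j+1}$ with the $[0,2]^j$ integral.
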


\begin{proof}
Applying \eqref{inductG} with $\l=1$ shows that 
\begin{equation} \label{eq:Gn_1_delta}
2 G_n^{1,\d}(t) =  \frac{t^{\d+2}}{n+1} \left[h_\delta(nt) - h_\delta ((n+2)t)\right] = 
  -  \frac{t^{\d+3}}{n+1} \int_0^2 h_\delta'((n+\xi)t) d\xi,
\end{equation}
which proves \eqref{induct-main} when $j =1$. For $j > 1$ we use induction and \eqref{inductG} to conclude 
\begin{align*}
\frac{2^{j+1}}{(2j +1)!!} & G_n^{j+1,\d}(t)   = \frac{ (-1)^{j} t^{\d+3} }{n+ j+1} \\
   &\times \int_{[0,2]^j} \left[ \frac{ H_{j}^\d(n,\xi_j ,\ldots, \xi_1,t) } {n+j}  
        - \frac{ H_{j}^\d(n+2,\xi_j ,\ldots, \xi_1,t) } {n+2+j} \right]  d\xi_j \cdots d\xi_1,
\end{align*}
from which the \eqref{induct-main} for $j+1$ follows readily. 
\end{proof}

In particular, for $j =2$, this gives 
\begin{align}  \label{induct2}
 \frac{4}{3} G_n^{2,\d}(t)  =  \frac{t^{\d+3}}{n+2} \int_0^2 \int_0^2 \frac{\partial}{\partial \xi_2 } 
 \left[\frac{h_\delta'((n+\xi_1+ \xi_2)t) }{n+1+\xi_2} \right]   d\xi_2 \,  d\xi_1.
\end{align}
 
\begin{lem}
For $r =0,1,2,...\l$, 
\begin{equation} \label{eq:h_lambda}
     |h_\l^{(r)}(u)| \le c \, u^{- r -2}, \qquad \text{for}  \   u \ge 1, 
\end{equation}
where the constant $c$ depends only on $\l$ and $r$.
\end{lem}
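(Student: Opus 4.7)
The plan is to bound $h_\l^{(r)}(u)$ by repeated integration by parts, exploiting the vanishing orders of the polynomial integrand at the two endpoints. Differentiating under the integral sign gives
\[
h_\l^{(r)}(u) \ = \ \varepsilon_r \int_0^1 (1-s)^{\l+1} s^r \, \phi_r(us) \, ds,
\]
where $\varepsilon_r = \pm 1$ and $\phi_r$ is $\cos$ or $\sin$ according to the parity of $r$. Set $p_r(s) = (1-s)^{\l+1} s^r$; this polynomial vanishes to order $\l+1$ at $s=1$ and to order $r$ at $s=0$.

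I would then carry out $r+2$ successive integrations by parts in the variable $s$. Each one picks up a factor $1/u$, shifts one derivative onto $p_r$, and produces a boundary term of the schematic form
\[
\Bigl[\, p_r^{(k-1)}(s) \, g_k(us)/u^k \, \Bigr]_0^1,
\]
where $g_k$ runs through $\pm\sin, \pm\cos$ with period four. The estimate then reduces to showing that every boundary contribution is $O(u^{-r-2})$ and that the final remainder integral is also $O(u^{-r-2})$.

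At $s=1$ the zero of $p_r$ of order $\l+1$ kills every boundary term with $k \leq \l+1$; since $r+2 \leq \l+2$, the only boundary term that may occur at $s=1$ is at $k=\l+2$, of size $O(u^{-\l-2}) = O(u^{-r-2})$. At $s=0$, on the one hand $p_r^{(j)}(0)=0$ for $j<r$, and on the other the sine-type trig factor appearing in alternate boundary terms vanishes at $0$. A short parity check, carried out separately for $r$ even and $r$ odd, shows that the first surviving boundary contribution at $s=0$ is pushed out to $k=r+2$, again of size $O(u^{-r-2})$. The remainder after $r+2$ integrations by parts takes the form $u^{-r-2}\int_0^1 p_r^{(r+2)}(s)\,\tilde\phi(us)\,ds$, which is trivially bounded by $\|p_r^{(r+2)}\|_\infty/u^{r+2}$, as $p_r$ is a polynomial.

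The main obstacle, and essentially the only delicate point, is the bookkeeping at $s=0$: one must verify that for both parities of $r$ the first index $k$ at which both $p_r^{(k-1)}(0)\neq 0$ and $g_k(0)\neq 0$ occur simultaneously is $k=r+2$. Once this parity check is in hand, summing the $O(u^{-r-2})$ contributions from the two endpoints and the remainder integral yields the claimed bound, with a constant depending only on $\l$ and $r$.
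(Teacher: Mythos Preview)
Your argument is correct. The paper takes a different route: from the Taylor-remainder identities used to derive the explicit formula for $F_n^\lambda$, one has the closed form $h_\lambda(u) = g_\lambda(u)/u^{\lambda+2}$, where $g_\lambda(u)$ is $(\lambda+1)!$ times a cosine or sine minus its Taylor polynomial of degree $\lambda$. Since trivially $|g_\lambda^{(j)}(u)| \le c\,u^{\lambda-j}$ for $0 \le j \le \lambda$ and $u \ge 1$, the bound on $h_\lambda^{(r)}$ follows at once from the Leibniz rule applied to the product $g_\lambda(u)\cdot u^{-\lambda-2}$. Your approach via repeated integration by parts is more self-contained, in that it does not depend on the closed-form expression for $h_\lambda$ established earlier, but it requires the extra parity bookkeeping at $s=0$ (which you carry out correctly: for either parity of $r$ the first index at which both $p_r^{(k-1)}(0)\neq 0$ and the trigonometric factor is a cosine is indeed $k=r+2$). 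The paper's argument, with the formula $h_\lambda = g_\lambda/u^{\lambda+2}$ already in hand, is somewhat shorter.
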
 
 
\begin{proof}
Let us denote by $g_\l$ the function
$$
 g_\l (u) : =  (\l+1)!
     \begin{cases} \displaystyle{(-1)^{\mu+1} \biggl [ \cos u - \sum_{j=0}^\mu (-1)^j  \frac{u^{2j}}{(2j)!} \biggr] }, & \l = 2 \mu, \\ 
         \displaystyle{ (-1)^\mu \biggl [\sin u - \sum_{j=0}^{\mu-1} (-1)^j  \frac{ u^{2j+1}}{(2j+1)!} \biggr]}, & \l = 2 \mu-1.
           \end{cases}
$$
From the displayed identities in the proof of Lemma 3.2, we then obtain
$ \displaystyle 
   h_\l(u) =   \frac{g_\l(u)}{u^{\l + 2}}. 
$ 
Since it is evident that $|g_\l(u)|  \le c \ u^\l$ for $u \ge 1$, it follows that $|h_\l(u)| \le c u^{-2}$. Taking derivatives, it is easy
to see that $|g_\l ^{(j)}(u)| \le c u^{\l -j}$ for $1 \le j \le \l$ and $u \ge 1$. Consequently, by the Leibniz rule, 
$$
   h_\l^{(r)}(u) = \sum_{j=0}^r \binom{r}{j} (-1)^{j} (\l+2)_j u^{- (\l+2 +j)}g_\l^{(r-j)}(u),
$$
from which the stated estimate follows. 
\end{proof}

\begin{lem} \label{lem:FnbBound}
Let $\|\cdot\|_\infty$ denote the uniform norm on $[0,\pi]$. For $\l = 1,2, \ldots$, 
$$
          \|F_n^\l\|_\infty = \CO \left( n^{-3}\right), \quad \text{as}\ n \to \infty.
$$
\end{lem}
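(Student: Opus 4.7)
The plan is to combine the recursion \eqref{induct-main} with the decay estimate \eqref{eq:h_lambda} on derivatives of $h_\delta$, and then convert the resulting bound on $G_n^{\lambda,\lambda}$ into a bound on $F_n^\lambda$ via the relation $F_n^\lambda(t)=C_n^\lambda(1)\,G_n^{\lambda,\lambda}(t)$ from \eqref{Gnlambda}, using $C_n^\lambda(1)=\binom{n+2\lambda-1}{n}$ which grows like a constant times $n^{2\lambda-1}$. I would split the argument into the regimes $nt\ge 1$ and $nt\le 1$.

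In the regime $nt\ge 1$, I would prove by induction on $j$ that, for all $\xi_i\in[0,2]$,
$$
|H_j^\delta(n,\xi_j,\ldots,\xi_1,t)| \ \le \ \frac{C_{j,\delta}}{n^{2j+1}\, t^3}.
$$
The base case $j=1$ follows from \eqref{eq:H_1} and the $r=1$ instance of \eqref{eq:h_lambda}, since $(n+\xi_1)t\ge nt\ge 1$. For the inductive step, one tracks that $H_j^\delta$ is a linear combination of terms of the invariant form
$$
\frac{t^{r-1}\, h_\delta^{(r)}((n+\xi_1+\cdots+\xi_j)t)}{\prod_{i=1}^{2j-r-1}(n+a_i)}, \qquad r=1,\ldots,j,
$$
where each $a_i$ is a nonnegative combination of some of the $\xi$'s and integers. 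Every such term is bounded by $C/(n^{2j+1}t^3)$ by \eqref{eq:h_lambda}. Applying the operator of \eqref{eq:defHj} produces either a chain-rule contribution (which raises $r$ to $r+1$ and multiplies by $t$) or a contribution from differentiating a denominator factor (which keeps $r$ but adds one more $(n+a)$ factor); the extra division by $n+j+\xi_{j+1}$ puts both kinds of terms back into the invariant form with the correct count of denominator factors. Substituting the resulting bound on $H_\lambda^\lambda$ into \eqref{induct-main} with $j=\delta=\lambda$ yields $|G_n^{\lambda,\lambda}(t)|\le C\,t^\lambda/n^{2\lambda+2}$, and hence $|F_n^\lambda(t)|\le C/n^3$ uniformly for $t\in[1/n,\pi]$.

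For the small-$t$ regime $0\le t\le 1/n$, I would bound $F_n^\lambda$ directly from \eqref{defn_Fsuplambda} using $|C_n^\lambda(\cos\theta)|\le C_n^\lambda(1)\le C\,n^{2\lambda-1}$ and $(\sin\theta)^{2\lambda}\le\theta^{2\lambda}$, which gives $|F_n^\lambda(t)|\le C\,n^{2\lambda-1}\,t^{3\lambda+2}$. For $t\le 1/n$ and $\lambda\ge 1$ this is at most $C\,n^{-\lambda-3}\le C\,n^{-3}$. Combining the two regimes delivers the claimed decay rate. The main obstacle is the structural induction for $H_j^\delta$: one has to identify the correct invariant family of terms and verify that both kinds of contributions arising from \eqref{eq:defHj} preserve it. Once this invariant is in place, the estimate \eqref{eq:h_lambda} feeds mechanically through the recursion and yields the bound.
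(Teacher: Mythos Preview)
Your proposal is correct and follows essentially the same strategy as the paper: split into $nt\ge 1$ and $nt\le 1$, and for the former carry out a structural induction on $H_j^\delta$ combined with the decay bound \eqref{eq:h_lambda}. Your bookkeeping via the explicit product form $t^{r-1}h_\delta^{(r)}/\prod_{i=1}^{2j-r-1}(n+a_i)$ is a cleaner variant of the paper's argument, which instead tracks rational coefficients $R_{i,j}$ with degree bound $\deg R_{i,j}\le 1+i-2j$ in $n$; the two formulations are equivalent, but your explicit $t^{r-1}$ factor makes the cancellation $t^{r-1}\cdot t^{-r-2}=t^{-3}$ transparent and yields directly the uniform-in-$r$ bound $C/(n^{2j+1}t^3)$. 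For the small-$t$ regime your direct estimate from \eqref{defn_Fsuplambda} using $|C_n^\lambda|\le C_n^\lambda(1)$ is more elementary than the paper's route, which invokes the explicit cosine expansion of Lemma~\ref{lem:Fn} together with the coefficient bound $|c_{k,n}^\mu|=\CO(n^{\mu-1})$.
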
 

\begin{proof} Throughout this proof $c$  represents a constant, possibly different at every occurrence, depending only on $\lambda$.

If $n t \le 1$, then we use the expression of $F_n^\l$ in Lemma 3.2, in which the terms in the square brackets are
bounded by an absolute constant depending only on $\l$. Thus, the estimate $F_n^\l(t)| \le c n^{-3}$ for $0 \le t \le n^{-1}$
follows immediately from the fact that the coefficients $c_{k,n}^\mu$ satisfies $|c_{k,n}^\mu| = \CO(n^{\mu -1})$, see \eqref{eq:c_mu_k_n_alt}. 

We now assume $ 0 < t \leq  \pi$, $n t \ge 1$, and $\xi_i \in [0,2]$, for all $i$. Since $F_n^\l = C_n^\l(1) G_n^{\l,\l} = \binom{n+ 2\l-1}{n} G_n^{\l,\l}$,  it is sufficient 
to show that  $\max_{t \in [n^{-1}, \pi]} |G_n^{\l,\l}(t)\| \leq c \, n^{- 2 \l - 2}$. We claim 
that for $1 \leq j \leq \l$ the kernel $H_j^\l$ of 
$G_n^{j,\l}$ in \eqref{induct-main} is of the form
\begin{equation} \label{eq:Hj}
  H_j^\l (n,\xi_j,\ldots, \xi_1,t) = \sum_{i=1}^j R_{i,j} (n,\xi_j,\ldots, \xi_1,t)\; h_\lambda^{(i)}((n+\xi_j+\ldots+\xi_1)t),
\end{equation}
where the $R_{i,j}$ are rational functions  of the form 
\begin{equation} \label{eq:Rij}
   R_{i,j} = \frac{P_{i,j}}{Q_{i,j}}, \qquad  \deg( R_{i,j}) := \deg P_{i,j}- \deg Q_{i,j}  \leq 1+i -2j, 
\end{equation}
where the $P_{i,j}$ are polynomials in $t,n, \xi_2, \ldots, \xi_j$;  the $Q_{i,j}$ are polynomials in 
$n, \xi_2, \ldots \xi_j$; and their degrees refer to their highest degree in $n$. Furthermore, 
\begin{equation}\label{eq:prop_of_Qij} \text{
the coefficient
of the highest power of $ n$ in the polynomial $Q_{i,j}$ can be chosen as $1$}.
\end{equation}

Assume for now that \eqref{eq:Hj}, \eqref{eq:Rij} and \eqref{eq:prop_of_Qij} have been shown. We then have  
$$
   |R_{i,j}(n,\xi_j,\ldots, \xi_1,t) | = \CO \left( n^{-(2j-i-1)}\right), 
$$ 
Since $n t \ge 1$ implies that $(n+\xi_j + \cdots + \xi_1)t \ge 1$, \eqref{eq:h_lambda} and \eqref{eq:Hj} then imply, 
$$
  |H_j^\l(n,\xi_j,\ldots, \xi_1,t) | = \CO \left( \sum_{i=1}^j n^{-(2j-i-1)} (n t)^{- i -2}  \right) \le 
     c \,t^{-j -2} n^{-2j-1},
$$
for $1\leq j \leq \lambda$, where  $0 \leq \xi_1 , \ldots , \xi_j \leq 2$. Consequently, 
from \eqref{induct-main} with $j = \l$ follows  
$$
\max_{t\in[n^{-1},\pi]} \left| G_n^{\l,\l}(t) \right|  \le 
\max_{t \in [n^{-1},\pi]}  c \, t^{\l+3} t^{-\l -2} n^{- 2 \l - 1} \le c\, n^{-2 \l -1},
$$ 
which shows that $\max_{t\in [n^{-1},\pi]} \left| F_n^\l (t) \right| = \CO \left( n^{-3}\right)$, and thus
$\|F_n^\l \|_\infty = \CO(n^{-3})$. 

\medskip
It only remains to prove  \eqref{eq:Hj}, \eqref{eq:Rij} and \eqref{eq:prop_of_Qij}. The proof is  by induction on $j$. 

{\em Induction basis:} In the case $j=1$ the identities
follow from the definition (\ref{eq:H_1}), and the choice $P_{1,1}=Q_{1,1}=1$. 

{\em Induction step:} Assume that the properties
have been established up to $j=k$. We shall leave out the argument of $H_{k+1}$ and $R_{i,k+1}$ below 
and trust that no confusion is likely to occur.  By \eqref{eq:defHj} and the induction hypotheses, 
$$
   H_{k+1}^\l  = \sum_{i =1}^k \frac{\partial}{\partial \xi_{k+1}}
     \left[ \frac{R_{i,k} (n+\xi_{k+1} ,\xi_k,\ldots, \xi_1,t)}{n+k+\xi_{k+1}} h_\lambda^{(i)}((n+\xi_{k+1}+\ldots+\xi_1)t) \right],
$$
from which we immediately deduce that \eqref{eq:Hj} holds for $H_{k+1}^\l$ with 
\begin{equation} \label{eq:Rjj-induct}
  R_{k+1,k+1} = t \frac{R_{k,k} (n+\xi_{k+1} ,\xi_k,\ldots, \xi_1,t)}{n+k+\xi_{k+1}},
\end{equation}
and, for $1 \le i \le k$, with $R_{0,k}:=0$,
\begin{align} \label{eq:Rij-induct}
 R_{i,k+1} & =  \frac{\partial}{\partial \xi_{k+1}}
 \left[ \frac{R_{i,k} (n+\xi_{k+1} ,\xi_k,\ldots, \xi_1,t)}{n+j+\xi_{k+1}}\right] + t \frac{R_{i-1,k} (n+\xi_{k+1} ,\xi_k,\ldots, \xi_1,t)}{n+k+\xi_{k+1}} \notag \\
  & =: R_{i,k+1}^{(1)} + R_{i,k+1}^{(2)}. 
\end{align}
It follows from \eqref{eq:Rjj-induct}, by the induction hypotheses, that  $\deg( R_{k+1,k+1}) = \deg( R_{k,k}) -1 \leq -k$. 
For $ 1 \le i \le k$,  quick computations show that, by  the induction hypotheses, 
$$
 \deg (R_{i,k+1}^{(1)}) = \deg (R_{i,k}) -2  \leq i-1-2k, \quad  \deg (R_{i,k+1}^{(2)}) = \deg (R_{i-1,k}) - 1 
\leq i -1 -2k.
$$
Finally, if $R = R_1 + R_2$ and $\max (\deg R_1, \deg R_2 ) = \ell$, then placing $R_1$ and $R_2$ over a common denominator, 
$\deg R \leq \ell$. Consequently, by \eqref{eq:Rij-induct}, $\deg R_{i,k+1} \leq i -1 -2k$. 
This shows \eqref{eq:Rij} for $1 \leq i \leq k+1$ and $j=k+1$. Finally,  \eqref{eq:prop_of_Qij} for $j=k+1$ and $1\leq i \leq k+1$ follows from  \eqref{eq:Rjj-induct}, \eqref{eq:Rij-induct} and the induction hythotheses.
Thus, if the three properties hold up to $j=k$, they also hold for $j=k+1$.

{\em Conclusion:} The result follows by induction for all positive integers $i$ and $j$, with $1\leq i \leq j$. 
\end{proof}

\medskip

{\em Proof of the case $d=4$, that is $\lambda =1$, of Proposition~\ref{propn:proven_cases}.}

\medskip

The case $n=0$ is trivial. Assume now that $n>0$.
Recalling that\\
 $U_n(\cos(\theta))=\sin((n+1)\theta)/\sin(\theta)$
we have 
\begin{align}
F_n^1 (t) &= \int_{\theta=0}^t \left(t -\theta \right)^2 U_n \left( \cos(\theta) \right) \sin^2 \theta 
   d\theta   \notag \\
&=\int_0^t \left(t -\theta \right)^2 \sin \left( (n+1) \theta \right) \sin \theta \, d\theta \notag \\
& = \frac12 \int_0^t \left(t-\theta \right)^2 \left\{ \cos(n\theta) - \cos( (n+2)\theta) \right\} d\theta
 \notag \\
& =  I(t,n)-I(t,n+2),  \label{HandI}
\end{align}
where 
$$ 
  I(t,m) = \frac12  \int_0^t\left( t - \theta \right)^2 \cos( m\theta) \, d\theta. 
$$
Integrating by parts shows that 
\begin{equation} \label{eq:posItm}
I(t,m) = t^3 h(m t ) \quad \hbox{with} \quad  h(u) =  \frac{ u - \sin u}{u^3}, \quad u > 0.
\end{equation}
Therefore
\begin{align} \label{derivI_a_}
\frac{\partial}{\partial m} I(t,m)   = - t^4 h'(m t) = - t^4 \; \frac{2 u + u \cos u -3 \sin u}{u^4},  
\end{align}
where $u=m t$.  By the trivial inequalities $|\cos u| \leq 1$ and $|\sin u| \leq 1$, we see  
that 
$$
 g(u):=  2 u + u \cos u -3 \sin u \ge   2 u  - u -3 = u - 3 > 0, \qquad \text{for}\ u>3.
$$
On the other hand, the Taylor expansion of $g(u)$ takes the form
$$
   g(u) = 2 \sum_{k=2}^\infty (-1)^k \frac{ (k-1) u^{2k+1}}{(2k+1)!}
       = \frac{2 u^5}{5!} -  \frac{4 u^7}{7!} +  \frac{6 u^9}{9!} -  \frac{8 u^{11}}{11!} + \ldots
$$
which is an alternating series, hence positive, if $0< u^2 < 21$, which clearly covers 
$u \in (0,3]$. Consequently, it follows that $g(u) > 0$ for $u > 0$, which implies that for $t>0$,
$h( m t)$, hence $I(t,m)$, is strictly decreasing in $m > 0$. Therefore, by (\ref{HandI}), 
$F_n^1(t)$ is positive for all $t>0$ and all positive integers $n$. \hfill $\Box$
 
\medskip
{\em Proof of the case $d=6$, that is $\lambda =2$, of Proposition~\ref{propn:proven_cases}.}

\medskip
The proof below splits into two cases, $t$ near zero, and  $a/n < t \leq \pi$, 
where $a$ is a constant yet to be determined.

We deal with the second case first.
Recall from \eqref{Gnlambda} that $F_n^\lambda(t) = C_n^\lambda(1) G_n^{\lambda,\lambda}(t)$,
and from \eqref{inductG},
$$
    \frac{2}{2\l-1}  G_n^{\l,\d}(t) = \frac{1}{n+\l} 
         \left[G_n^{\l-1,\d}(t) - G_{n+2}^{\l-1,\d}(t)\right]. 
$$
We need to show that $G_n^{2,2}(t) > 0$, which holds if $\frac{d}{dn}G_n^{1,2}(t) < 0$.

From \eqref{eq:Gn_1_delta}
$$
2\, G_n^{1,2}(t) =   
  -  \frac{t^{5}}{n+1} \int_0^2 h_2'((n+\xi)t) d\xi,
$$
where $h_2$ is defined in  \eqref{eq:h_delta}. Therefore, writing $h$ for $h_2$, we have
$$
2t^{-5} \frac{d}{dn} G_n^{1,2}(t) = - \int_{0}^2  \left[ \frac{ t\, h''((n+ \xi)t)}{n+1} - \frac{h'((n+\xi)t)}{(n+1)^2}
        \right] d\xi   .   
$$

Our immediate goal is to find a constant $a$ such 
that $\frac{d}{dn} G_n^{1,2}(t) < 0$ for $a/n < t \leq \pi,$ which will allow us
to conclude that $F^2_n(t) > 0$, for $a/n < t \leq \pi$. Evidently, it is sufficient
to show that the function being integrated above is positive. Hence, we see that 
$\frac{d}{dn} G_n^{1,2}(t) < 0$ if 
\begin{equation} \label{Hu}
  H(u):=  (n+1) t\, h ''((n+\xi)t) - h'((n+\xi)t ) = \frac{n+1}{n+\xi} u h''(u) - h'(u)  > 0,
\end{equation}
for all $ 0 < \xi < 2$, where $u = (n+\xi)t$.
 
A simple calculation  shows that
$$ h(u)=h_2(u) = \frac{6}{u^4} \left( \cos(u) -1 + \frac{u^2}{2} \right).
$$

Next we show that for all $u>0$, $h'(u) < 0$. 
Taking derivatives and Maclaurin series we see that 
\begin{align*}
   h'(u) &= \frac{ 6 \left(4 - u^2 - 4 \cos u - u \sin u \right) }{u^5}  \\
          & = \frac{6}{u^5} \sum_{k=3}^\infty \frac{2 (-1)^k \, (k-2) }{ (2k)!} u^{2k}
           =  \frac{6}{u^5} \left( -\frac{u^6}{360} + \frac{u^8}{10080} - \frac{u^{10}}{604800} - \ldots\right),
\end{align*}
The series is an alternating series, and negative, for $u^2 < 28$, thus certainly for $0 < u \leq 4$.  Furthermore, when $u>4$ the $-u^2$ dominates the other terms in the numerator 
and $h'(u)$ is again negative. The desired result follows.

Now we consider 
$$
   h''(u)  = 
       \frac{6 (-20 + 3 u^2 - (-20 + u^2) \cos u  + 8 u \sin u )}{u^6}.
$$
The trivial inequalities $|\sin u| \leq 1$ and $|\cos u| \leq 1$ show that
$$
   h''(u) \ge 6 u^{-6}(3 u^2 - u^2 - 8 u - 40) = 12 u^{-6} (u^2 - 4 u - 20) > 0,
$$
if $u > 2(1+\sqrt{6}) = 6.898989...$\ . In fact, numerical computation shows that $h''(u)$ has 
a single root for $0 < u < 7$ at $u_0 = 3.68542 ....$. Thus, $h''(u)$ is positive if $u > u_0$.
Combining these facts about $h'$ and $h''$ we see that $H(u) > 0$ if $u > u_0$. 

We now consider the case $0 < u \leq u_0$ for which  $h''(u) \le 0$. Then  
$0 < \xi < 2$ implies by \eqref{Hu}
$$
  H(u) \ge \frac{n+1}{n}  u h''(u) - h'(u) \ge \frac{4}{3} u h''(u) - h'(u), \qquad 
\text{when} \ n \ge 3. 
$$
Define $k(u)$ to be the estimate of  $H(u)$ from below given above
$$
 k(u):=\frac{4}{3} u h''(u) - h'(u) = \frac{u^{-5}}{3} \left[-92+ 15 u^2 -4(-20+u^2) \cos u + 
35 u \sin u +12 \cos u\right]. 
$$
Applying the trivial inequalities $|\cos u| \leq 1$ and $|\sin u| \leq 1$ shows that $k(u)$
is positive if $u > 5.9793...  $. Numerical computation further shows that $k(u)$ has one 
single zero in $0 < u < 6$ at $u_1=1.86321... $. That is $k(u)$ is positive for all
$u > u_1$. 

Combining the results so far we have shown $H(u)>0$ for all $u > u_1$. In view of
the remarks near equation \eqref{Hu} this allows us to conclude that 
\begin{equation} \label{eq:lam2_away_from_zero}
F^2_n(t) > 0\ \text{whenever} \  u_1/n < t \leq \pi, \ \text{and} \ n \geq 3.
\end{equation}

We still have to analyze the behaviour of $F_n(t)$ when $0 < t \leq u_1/n$.
By an inequality in \cite{El90}, the largest zero of the Gegenbauer polynomial
$C_n^\l$ satisfies the inequality
$$
    x_{n,1}(\l) \le \frac{\sqrt{n^2+2(n-1)\l -1} }{(n+\l)}. 
$$ 
Taking $t_n^* =\arccos (x_{n,1}(2))$ it follows that
$$ \cos(t^{*}_n)=x_{n,1}(2) \leq \sqrt{ 1 -\frac{9}{(n+2)^2}} $$
so  that 
$$ (\sin t^{*}_n)^2 \geq \frac{9}{(n+2)^2},
$$ and $C^2_n(\cos \theta )$ is positive whenever $0 <\theta < t_n^* $, and therefore whenever
$0 < \theta \leq \sin t_n^* $.
By the definition of $F_n^2(t)$, 
this shows that
\begin{equation} \label{eq:lam2_near_zero}
F^2_n(t) > 0 \ \text{ whenever} \  0 < t \leq 3/(n+2).
\end{equation}
The regions of positivity given by equations (\ref{eq:lam2_away_from_zero}) and (\ref{eq:lam2_near_zero})
overlap when $n \geq 4 $, so that for all such $n$, $F^2_n(t) >0$ for all $0 < t \leq \pi$.
The same conclusion can be reached for $0\leq n \leq 3 $ by plotting the explicit expression
 (\ref{eq:F2}) for $F^2_n(t)$. \hfill $\Box$

\bigskip
{\em Proof of the case, $d=8$ that is $\lambda =3$, of Proposition~\ref{propn:proven_cases}.}

\bigskip
The proof below splits into two cases, $t$ near zero, and $t$  greater than $a/n$, as did the proof in the
$d=6$ case.

\medskip
We deal firstly with the case $t > a/n$. 
Recall from \eqref{Gnlambda} that $F_n^\lambda(t) = C_n^\lambda(1) G_n^{\lambda,\lambda}(t)$,
and from \eqref{inductG}
$$
    \frac{2}{2\l-1}  G_n^{\l,\d}(t) = \frac{1}{n+\l} 
         \left[G_n^{\l-1,\d}(t) - G_{n+2}^{\l-1,\d}(t)\right]. 
$$
We need to show that $G_n^{3,3}(t) > 0$, which holds if $\frac{d}{dn}G_n^{2,3}(t) < 0$.

Our immediate goal therefore is to find a constant $a$ so that
$\frac{d}{d n} G_n^{2,3}(t) < 0$ for all $a/n < t \leq \pi$. By
\eqref{induct2}  with $\delta=3$, and the function $h=h_3$, defined in \eqref{eq:h_delta},
$$
\frac{4}{3} G_n^{2,3}(t)= \frac{t^6}{n+2} \int_0^2 \int_0^2 \left[ 
    \frac{t h''((n+\xi+\eta)t)}{n+1+\eta} - \frac{h'((n+\xi+\eta)t)}{(n+1+\eta)^2} 
        \right] \, d\eta \, d\xi. 
$$
Taking the derivative with respect to $n$ and simplifying, we obtain
\begin{align*}
  \frac{4}{3}\frac{d}{dn} G_n^{2,3}(t) = t^6 \int_0^2 \int_0^2 & \left[ 
   \frac{t^2 h'''((n+\xi+\eta)t)}{(n+2)(n+1+\eta)} 
     - \frac{(3n+5+\eta) t h''((n+\xi+\eta)t)}{(n+2)^2 (n+1+\eta)^2} \right.  \\
     & \qquad \left.     +  \frac{(3n+5+\eta)  h'((n+\xi+\eta)t)}{(n+2)^2 (n+1+\eta)^3} 
        \right] \, d\eta \, d\xi. 
\end{align*}
In order to show that $\frac{d}{dn} G_n^{2,3}(t)  < 0$, we only need to show that the
integrand is negative, for all $0 < \xi, \eta <2$. We introduce a function
\begin{align} \label{Hn}
  H_n(u)  : =  u^2 h'''(u) & - \frac{(3n+ 5 + \eta)(n+ \xi + \eta)}{(n+2)(n+ 1 + \eta)} u h''(u) \\ 
    &  + \frac{(3n+ 5 + \eta)(n+ \xi + \eta)^2}{(n+2)(n+ 1 + \eta)^2} h'(u). \notag
\end{align}
Then it is easy to see that 
\begin{align} \label{Gn23}
\frac{4}{3}\frac{d}{dn} G_n^{2,3}(t) = t^6 \int_0^2 \int_0^2  a_n H_n( (n+\xi+\eta)t) \,d\eta \, d \xi,
\end{align}
where $a_n = a_n(\xi,\eta)= 1/((n+2)(n+1+\eta)(n+\xi+\eta)^2) > 0$. Thus, to demonstrate that
 $\frac{d}{dn} G_n^{2,3}(t)$
is negative, it is sufficient to show that
$H_n(u) < 0$ for all $0 < \xi, \eta <2$, where $u=(n+\xi+\eta)$. 

Now, a simple computation shows that 
$$
  h(u)= h_3(u) = \int_0^1( 1-s)^4 \cos (s u) ds = \frac{4 (-6 u + u^3 + 6 \sin u)}{u^5}. 
$$
Therefore
\begin{align*}
   h'(u)  & =- \frac{8 (u^3 -12 u   - 3 u \cos u + 15 \sin u)}{u^6}, \\
   h''(u)  & = \frac{24 (u ( u^2-20) - 10 u \cos u  - ( u^2 -30) \sin u)}{u^7}, \\
   h'''(u)  & = - \frac{24 (4 u ( u^2-30) + u ( u^2 -90) \cos u - 
    15 ( u^2-14) \sin u )}{u^8}. 
\end{align*}
It is immediately clear that  for all large $u$ the signs of the $h^{(j)}(u)$ alternate in such 
a way as to make $H_n(u) $ (defined in \eqref{Hn}) negative. We need a good
estimate of  just how large $u$ must be.

\bigskip
The signs $h^{(j)}(u)$ can be determined as in the previous cases.
It turns out that $h'(u) < 0$ for all $u > 0$. Elementary consideration shows that 
$h''(u) > 0$ for all large $u$ and numerical computation shows that $h''(u)$ has 
one simple zero for $u >0$ at $u_2 = 4.23573...$, so that $h''(u) > 0$ for $u > u_2$
and $h''(u) < 0$ for $0< u < u_2$. Similarly, $h'''(u)$ has one simple zero for $u > 0$ 
at $u_3 = 7.15125...$, $h_3'''(u) < 0$ for $u > u_3$  and $h'''(u) > 0$ for $0< u < u_3$.
We have several cases.

\medskip
{\it Case 1.} $u \geq u_3$. In this case, $h'(u) < 0$, $h''(u) > 0$ and $h'''(u) \leq 0$. That 
$H_n(u) < 0$ then follows immediately from the definition in \eqref{Hn}.

\medskip
{\it Case 2.} $0 < u< u_3$.  We write $H_n(u)$ as 
\begin{align*}
  H_n(u)  =  u^2 h'''(u) & - \frac{(3n+ 5 + \eta)(n+ \xi + \eta)}{(n+2)(n+ 1 + \eta)}
     \left[u h''(u) - \frac{n+ \xi + \eta}{n+ 1 + \eta}\, h'(u) \right]. 
\end{align*}
Since $h'(u) < 0$ it follows readily that for $0 < \xi, \eta  < 2 $, 
\begin{align*}
& u h''(u) - \frac{n+ \xi + \eta}{n+ 1 + \eta}\, h'(u)   
   > u h''(u) - \left(1-\frac{1}{n+1+\eta}\right)  h'(u) \\
     & \quad > u h''(u) - \left(1-\frac{1}{n+1}\right)  h'(u) \ge uh''(u) - h'(u)/2 > 0,
\end{align*}
if $u > u_0 = 2.99521 ...$, and the last quantity on the right of the display is zero
at $u=u_0$. Consequently, we obtain that for $u > u_0$, 
$$
H_n(u) < u^2 h'''(u) -  \left(3-\frac{1}{n+2}\right) \left(1-\frac{1}{n+1}\right)\left[u h''(u) - \left(1-\frac{1}{n+1}\right)  h'(u) \right].
$$
Denote the right hand side of the above inequality by $\Lambda_n(u)$. $\Lambda_n(u)$ is positive at
$u_0$ if $n=1$ and is negative at $u_3$ for all $n\in \N$. It is also a decreasing function of $n$ for
$u_0 < u $. Numerical computation shows that
$$
    \Lambda_{9}(u) < 0, \qquad \hbox{if} \quad u > u^* = 3.63661 ... . 
$$ 
This shows that $H_n(u) < 0$ for $ u^* < u \leq u_3$  and $n \ge 9$.
 
\medskip
We have already shown in case~1
that $H_n(u) <0$ if $u \geq u_3$. Hence $H_n(u) < 0$  on $(u^*,\infty)$, if $n \geq 9$.
As $u = (n+\xi+\eta)t$,
it follows by \eqref{Gn23} that $\displaystyle \frac{d}{dn} G_n^{2,3}(t) > 0$ if $n t > u^*$ or $t > u^*/n$.
Consequently, by \eqref{inductG} and \eqref{Gnlambda} we conclude that $ F^3_n(t)= 
C^3_n(1)\,G_n^{3,3}(t)  > 0$ if $t > u^* /n$ and $n \ge 9$.

On the other hand, an inequality in \cite{Ar04} shows that the largest zero of the Gegenbauer polynomial $C_n^\l$ satisfies the inequality
$$                                                                              
    x_{n,1}(\l) \le \sqrt{\frac{(n-1)(n+2\l -2)}{(n+\l-2)(n+\l -1)}} \cos \frac{\pi}{n+1}.                                                                    
$$
Therefore, defining $t_n^* =\arccos( x_{n,1}(3))$, 
$$
\cos^2 t_n^* \leq \frac{(n-1)(n+4)}{(n+1)(n+2)}\cos^2 \frac{\pi}{n+1},
$$
 and
\begin{equation} \label{eq_bound_lam3}
 \sin t_n^* \ge \sqrt{ 1 -\left( 1 -\frac{6}{(n+1)(n+2)} \right) 
\cos^2 \frac{\pi}{n+1}}.
\end{equation}
$C^3_n(\cos \theta)$ is positive for $0 <  \theta < t_n^*$, and therefore, from its definition,
$F^3_n(t)$ is positive 
for $0 < t \leq \sin( t_n^*)$.
Estimating $\sin t^*_n$ from below we have
\begin{equation}
\sin t_n^*  \geq \sqrt{ 1 - \left( 1 - \frac{6}{(n+1)^2}\right) \left( 1 -\frac{\pi^2}{2(n+1)^2} 
+ \frac{\pi^4}{24 (n+1)^4 } \right)^2 }   \\
= \frac{\sqrt{6+\pi^2}}{n+1} + {\mathcal O} \left( \frac{1}{n^2} \right). \label{sin_t_est_lam_3}
\end{equation}
At this point we have shown
$F^3_n(t)$ to be positive on $(0, \sin(t_n^*)]$ and also on $(u^*/n, \pi]$.
 Since $\sqrt{6+\pi^2} = 3.983667.. > u^*$ the asymptotic estimate of $\sin(t_n^*)$ above shows
that the regions on which $F_n^3(t)$ is positive overlap, and cover all of $(0,\pi]$,
for all large enough $n$. Numerical comparison of $\sin t_n^* $ and $u^*/n$ shows that the overlap
happens for all $n> 14$.  The proof of the positivity of $F^3_n(t)$ on $(0,\pi]$,  when $0\leq n \leq 14 $, can be completed by plotting the explicit expression
 (\ref{eq:F3}) for $F^3_n(t)$  on $[0,\pi]$. \qed
\newpage

\renewcommand{\section}[2]{\medskip{\large\bf #2}\stepcounter{section}}

\bigskip

\begin{tabbing}
xxxxxxxxxxxxxxxxxxxxxxxxxxxxxxxxxxxxxx\=xxxxxxxxxxxxxxxxxxxxxxxxxxxxxxx\kill
R. K. Beatson\> Wolfgang zu Castell\\
Department of Mathematics and Statistics\> Department of Scientific Computing\\
University of Canterbury\> Helmholtz Zentrum M\"{u}nchen\\
Private Bag 4800\>German Research Center  for\\
Christchurch, New Zealand \> \hspace{06ex}  Environmental Health\\
{\tt r.beatson@math.canterbury.ac.nz}\> Ingolst\"{a}dter Landstra\ss\,e 1\\
 \> 85764 Neuherberg, Germany\\
\> {\tt castell@helmholtz-muenchen.de}\\
Yuan Xu\\
Department of Mathematics\\
University of Oregon\\
Eugene, Oregon 97403-1222\\
U.S.A.\\
{\tt yuan@math.uoregon.edu}\\

\end{tabbing}

\end{document}